\makeatletter \@addtoreset{equation}{section} \makeatother
\newtheorem{theorem}{Theorem}[section]
\newtheorem{definition}{Definition}[section]
\newtheorem{lemma}{Lemma}[section]
\begin{document}
\title{Existence and symmetry of solutions for critical fractional \\ Schr\"{o}dinger equations with bounded potentials }
\author{Xia Zhang$^a$, Binlin Zhang$^{b, }$\footnote{Corresponding author.
E-mail address:\,piecesummer1984@163.com (X. Zhang), zhangbinlin2012@163.com(B. Zhang), dusan.repovs@guest.arnes.si (D. Repov\v{s})}\ \ and\ Du\v{s}an Repov\v{s}$^{c}$\\
\footnotesize $^a$ Department of Mathematics, Harbin Institute of Technology,
Harbin 150001, P.R. China\\
\footnotesize $^b$ Department of Mathematics, Heilongjiang Institute of Technology,
Harbin 150050, P.R. China\\
\footnotesize $^c$ Faculty of Education and Faculty of Mathematics and Physics,
University of Ljubljana,\\ \footnotesize
Kardeljeva plo\v{s}\v{c}ad 16, SI-1000 Ljubljana, Slovenia
}
\date{ }
\maketitle

\begin{abstract}
{This paper is concerned with the following fractional Schr\"{o}dinger equations involving critical exponents:
\begin{eqnarray*}
(-\Delta)^{\alpha}u+V(x)u=k(x)f(u)+\lambda|u|^{2_{\alpha}^{*}-2}u\quad\quad
\mbox{in}\ \mathbb{R}^{N},
\end{eqnarray*}
where $(-\Delta)^{\alpha}$ is the fractional Laplacian operator with $\alpha\in(0,1)$,
$N\geq2$, $\lambda$ is a positive real parameter and $2_{\alpha}^{*}=2N/(N-2\alpha)$ is the critical Sobolev exponent,
$V(x)$ and $k(x)$ are positive and bounded functions satisfying some extra hypotheses.
Based on the principle of concentration compactness  in the fractional  Sobolev space and the minimax arguments, we obtain the existence of a  nontrivial  radially symmetric weak solution for the above-mentioned equations without assuming the Ambrosetti-Rabinowitz
condition on the subcritical nonlinearity.}\medskip

\emph{\bf Keywords:}   fractional Schr\"{o}dinger equations;  critical Sobolev exponent; Ambrosetti-Rabinowitz
condition;  concentration compactness principle\medskip

\emph{\bf 2010 MSC:} 35A15, 35J60, 46E35.
\end{abstract}

\section{Introduction and main result}

\quad In this paper,  we study the solutions of the  following Schr\"{o}dinger equations involving a critical nonlinearity:
\begin{eqnarray}\label{p}
(-\Delta)^{\alpha}u+V(x)u=k(x)f(u)+\lambda|u|^{2_{\alpha}^{*}-2}u\quad\quad
\mbox{in}\ \mathbb{R}^{N},
\end{eqnarray}
driven by the fractional Laplacian operator $(-\Delta)^{\alpha}$ of order $\alpha\in(0,1)$,
where $N\geq2$,    $\lambda$ is a positive real parameter and $2_{\alpha}^{*}=2N/(N-2\alpha)$ is the critical Sobolev exponent.

The fractional Laplacian operator $(-\Delta)^{\alpha}$, which (up to normalization constants),  may be defined as
$$(-\Delta)^{\alpha}u(x):= \mbox{P.V.} \int_{\mathbb{R}^{N}}\frac{u(x)-u(y)}{|x-y|^{N+2\alpha}}\,dy,\ \ \ x\in\mathbb{R}^{N},$$
where P.V. stands for the principal value.
It may be viewed as the infinitesimal generators of a
L\'{e}vy stable diffusion processes (see \cite{Apple2004}). This operator arises in the description of various phenomena in the applied sciences,  such as phase transitions,  materials science, conservation laws, minimal surfaces, water waves, optimization, plasma physics and so on,
see \cite{Nezza} and references therein for more detailed introduction. Here we would like to point out some interesting models involving the fractional Laplacian, such as, the fractional Schr\"{o}dinger equation (see \cite{Dipierro2013, Felmer, Laskin2000, L2, MRa}), the fractional Kirchhoff equation (see \cite{FV, PXZ, PXZ2, XZF1, XZG}), the fractional porous medium equation (see \cite{CV, JLV}),
the fractional Yamabe problem (see \cite{PPS}) and so on, have attracted recently considerable attention.  As a matter of fact, the literature on fractional operators and their applications to partially differential equations is quite large, here we would like to mention a few, see for instance \cite{AP, Cabre,  Chang2013, MS, MRe, Ros-Oton}.

In what follows, let us sketch the related advance involving the fractional Schr\"{o}dinger equations with critical growth in resent years.
In \cite{SZ}, Shang and Zhang studied the existence and multiplicity of solutions for the critical fractional Schr\"{o}dinger equation:
\begin{eqnarray}\label{L1}
\varepsilon^{2\alpha}(-\Delta)^{\alpha}u+V(x)u=\lambda f(u)+|u|^{2_{\alpha}^{*}-2}u\quad\quad
\mbox{in}\ \mathbb{R}^{N}.
\end{eqnarray}
Based on variational methods, they showed that problem \eqref{L1} has a nonnegative ground state solution for all sufficiently large $\lambda$ and small $\varepsilon$. In this paper, the following monotone condition was imposed on the continuous subcritical nonlinearity $f$:
\begin{align}\label{L2}
f(t)/t\ \mbox{is strictly increasing in}\ (0,+\infty).
\end{align}
Observe that \eqref{L2} implies  $2 F(t)< f(t)t$, where $F(t):=\int_0^t f(\xi)\,d\xi$. Moreover, Shen and Gao in \cite{SG} obtained the existence of nontrivial solutions for problem \eqref{L1}
under various assumptions on $f(t)$ and potential function $V(x)$, in which the authors assumed
the well-known Ambrosetti-Rabinowitz
condition ((AR) condition for short) on $f$:
\begin{align}\label{L3}
\mbox{there exists}\ \mu>2\ \mbox{such that}\ 0<\mu F(t)\leq f(t)t\ \,\mbox{for\ any}\   t>0.
\end{align}
 See also recent papers \cite {SZY1, SS1} on the fractional Schr\"{o}dinger equations \eqref{L1}.
In \cite{KT}, Teng and He were concerned with the following fractional Schr\"{o}dinger equations involving a critical nonlinearity:
\begin{eqnarray}\label{L4}
(-\Delta)^{\alpha}u+u=P(x)|u|^{p-2}u+Q(x)|u|^{2_{\alpha}^{*}-2}u\quad\quad
\mbox{in}\ \mathbb{R}^{N}.
\end{eqnarray}
where $2<p<2^{*}_{\alpha}$, potential functions $P(x)$ and $Q(x)$ satisfy certain hypotheses.
Using the $s$-harmonic extension technique of Caffarelli and Silvestre \cite{CS},
the concentration-compactness principle of Lions \cite{PL} and methods of Br\'{e}zis and Nirenberg \cite{BN},
the author obtained the existence of ground state solutions.
On fractional Kirchhoff problems involving critical nonlinearity, see for example \cite{AFP, Pucci} for some recent results.
Last but not least, fractional elliptic problems with critical growth, in a bounded domain, have been studied by some authors in the last years,
see \cite{Barrios,BCSS,  Hua, MS1, SV1, SV4} and references therein.

On the other hand, Feng in \cite{Feng} investigated the following fractional Schr\"{o}dinger equations:
\begin{eqnarray}\label{L5}
(-\Delta)^{\alpha}u+V(x)u=\lambda|u|^{p-2}u\quad\quad
\mbox{in}\ \ \mathbb{R}^{N},
\end{eqnarray}
where $2<p<2^{*}_{\alpha}$, $V(x)$ is a positive continuous function.
By using the fractional version of concentration compactness principle of Lions \cite{PL},
the author obtained the existence of ground state solutions to problem \eqref{L5} for some $\lambda>0$.
Zhang {\em et al.} in \cite{ZZX} considered the following fractional Schr\"{o}dinger equations with a critical nonlinearity:
\begin{eqnarray}\label{Eq}
(-\Delta)^{\alpha}u+u=\lambda f(u)+|u|^{2_{\alpha}^{*}-2}u\quad\quad
\mbox{in}\ \mathbb{R}^{N}.
\end{eqnarray}
Based on another fractional version of concentration compactness principle (see \cite[Theorem 1.5]{Palatucci2014})
and radially decreasing rearrangements,
they obtained the existence of a ground state solution for \eqref{Eq} which is nonnegative and radially symmetric for any $\lambda\in[\lambda_{*},\infty)$, where $\lambda_{*}>0$.

Inspired by the above works, we are interested in non autonomous cases \eqref{p}, that is, $V(x)$ is not only a constant.
To this end, we assume the following conditions on the potential $V$:

{\em\begin{itemize}
\item[${\rm (V1)}$] $V\in C^1(\mathbb{R}^N,\mathbb{R})$ and  $ \nabla V(x)\cdot x\leq 0$ for any $x\in\mathbb{R}^N$;
\item[${\rm (V2)}$] $V$ is  radially symmetric, i.e.  $V(x)=V(|x|)$ for any $x\in\mathbb{R}^N$ and there exist positive constants $V_{1}$ and  $V_{2}$ such that $V_{1}\leq V(x)\leq V_{2}$ for any $x\in\mathbb{R}^N$.
\end{itemize}}

Moreover, the following assumptions are imposed on the coefficient $k$:

{\em\begin{itemize}
\item[${\rm (K1)}$] $k$ is  radially symmetric and there exist positive constants $k_{1}$ and  $k_{2}$ such that $k_{1}\leq k(x)\leq k_{2}$ for any $x\in\mathbb{R}^N$;
\item[${\rm (K2) }$] $k\in C^1(\mathbb{R}^N,\mathbb{R})$ and there exists a constant $k_0$ such that $0\leq \nabla k(x)\cdot x\leq k_0$ for any $x\in\mathbb{R}^N$.
\end{itemize}}

\noindent\textbf{Remark 1.1} Since $\nabla V(x)\cdot x=V'(|x|)|x|$, it follows from  (V1)  that $V'(|x|)\leq0$.
Thus we can choose $V$ to be  a positive constant.
Another example for $V$ is given by $V(x)=2-\arctan|x|$.
From (K2), $k'(|x|)\geq0$. Hence we can choose $k(x)=2+\arctan|x|$ as a simple example. The condition (V1) and (K2) were motivated by \cite{Jeanjean, SS2}.

\vspace{2mm}

Meanwhile, the nonlinearity $f$ will satisfy:

{\em\begin{itemize}
\item[${\rm (H1)}$] $f\in C^1(\mathbb{R},\mathbb{R})$. For any $t\leq0$, $f(t)=0$;
\item[${\rm (H2)}$] $\lim_{t\rightarrow0^+}\frac{f(t)}{t}=0$ and $\lim_{t\rightarrow+\infty}\frac{f(t)}{t^{2_{\alpha}^{*}-1}}=0$;
\item[${\rm (H3)}$] For any $t>0$, $0<2F(t)\leq f(t)t$;
\item[${\rm (H4)}$] There exists $T>0$ such that $F(T)>\frac{V_{2}}{2k_{1}}T^2$.
\end{itemize}}

\noindent\textbf{Remark 1.2} In order to seek nonnegative solutions of (\ref{p}), we assume that  $f(t)=0$  for any $t\leq0$ in (H1). Moreover, from (H2) we know that $f$ is subcritical.
Here we do not assume classical condition \eqref{L2} or \eqref{L3},
while the weaker condition (H3) on $f$ is employed to replace (AR) condition.
A typical  example   for  $f$  is   given  by
$$f(t)=t\log\left[1+t\left(t^2-\frac{1}{2}t-\frac{3}{2}at+a\right)\right]$$
for any $t\geq0$ and a certain constant $a>1/3$ which is sufficiently close to $1/3$.
It is easy to see that the function $f$ does not fulfill the monotone condition \eqref{L2} and the (AR) condition \eqref{L3}.

\vspace{2mm}

Now we  give the definition of  weak solutions  for problem   (\ref{p}):

\begin{definition}\label{def}
{\em We say that
$u$ is a weak solution of \textup{(\ref{p})} if for any $\phi\in H^{\alpha}(\mathbb{R}^{N})$,
\begin{displaymath}
\begin{split}
\int_{\mathbb{R}^{N}}((-\Delta)^{\frac{\alpha}{2}}u\cdot(-\Delta)^{\frac{\alpha}{2}}\phi+V(x)u\phi)\,dx
=\int_{\mathbb{R}^{N}}(k(x)f(u)+\lambda|u|^{2_{\alpha}^{*}-2}u)\phi\,dx,
\end{split}
\end{displaymath}
where $H^{\alpha}(\mathbb{R}^{N})$ is the fractional Sobolev space, see Section 2 for more details.}
\end{definition}

The energy
functional on $H^{\alpha}(\mathbb{R}^{N})$ is defined as follows:
\begin{eqnarray*}
\begin{split}
I(u)=\frac{1}{2}\int_{\mathbb{R}^{N}} (|(-\Delta)^{\frac{\alpha}{2}} u|^{2}+V(x)u^{2})\,dx
-\int_{\mathbb{R}^{N}}k(x)F(u)\,dx-\frac{\lambda}{2_{\alpha}^{*}}\int_{\mathbb{R}^{N}}|u|^{2_{\alpha}^{*}}\,dx.
\end{split}
\end{eqnarray*}
It is easy to check that $I\in
C^{1}(H^{\alpha}(\mathbb{R}^{N}),\,\mathbb{R})$ and the critical point for $I$ is the weak solution of  problem (\ref{p}).
Let $O(N)$ be the group of orthogonal linear transformations in
$\mathbb{R}^{N}$. It is immediate  that $I$ is
$O(N)$-invariant. Then, by the principle of symmetric criticality of
Krawcewicz and Marzantowicz \cite{Kr1990}, we know that  $u_{0}$ is
a critical point of $I$ if and only if $u_{0}$ is a critical
point of
$$\widetilde{I}=I\big|_{H_{r}^{\alpha}(\mathbb{R}^{N})},$$
where
$$H^{\alpha}_{r}(\mathbb{R}^{N})=\left\{u\in H^{\alpha}(\mathbb{R}^{N}):\ u(x)=u(|x|)\right\},$$
is the fractional  radially symmetric Sobolev space.
Therefore, it suffices to prove the existence of
critical points  for $\widetilde{I}$ on $
H_{r}^{\alpha}(\mathbb{R}^{N})$.

Now we are in a position to state our main result as follows:

\begin{theorem}\label{Th1}
 Assume that hypotheses
\textup{(H1)--(H4)}, \textup{(V1)--(V2)} and \textup{(K1)--(K2)} are fulfilled. Then there exists $\lambda_{*}>0$
such that for any $\lambda\in(0,\lambda_{*})$, problem
\textup{(1.1)} has a nontrivial weak solution $u_{0}\in  H^{\alpha}(\mathbb{R}^{N})$ which is nonnegative and  radially symmetric.
\end{theorem}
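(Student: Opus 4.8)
The plan is to apply a minimax (Mountain Pass) argument to the restricted functional $\widetilde I$ on $H^\alpha_r(\mathbb R^N)$, using a truncation of the critical term to recover enough compactness, and then to pass to the limit. First I would verify the geometry: hypotheses (H1)--(H3) plus the boundedness of $V$ and $k$ give, for $u$ small in $H^\alpha$, a lower bound $\widetilde I(u)\ge \rho_0>0$ on a sphere $\|u\|=\rho$; here (H2) controls $F(t)$ near $0$ by $\varepsilon t^2 + C_\varepsilon t^{2^*_\alpha}$ and the fractional Sobolev embedding takes care of both terms. For the second geometric condition, (H4) is used: choosing a fixed radial test function built from (a radial mollification of) $T\cdot\mathbf 1_{B_1}$, or more simply using (H3) to get $F(t)\ge c t^2$ for large $t$, one shows $\widetilde I(tu_0)\to-\infty$ as $t\to+\infty$ for suitable $u_0$, so there is $e$ with $\widetilde I(e)<0$. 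This produces a Mountain Pass level $c_\lambda>0$, and I would note $c_\lambda$ is bounded above uniformly for $\lambda$ in a bounded interval and, crucially, that $c_\lambda\to c_0$ (the $\lambda=0$ level) is not quite what we want — instead I want $c_\lambda$ to lie strictly below the critical threshold $\frac{\alpha}{N}S^{N/(2\alpha)}\lambda^{-(N-2\alpha)/(2\alpha)}$ (with $S$ the fractional Sobolev constant) for $\lambda$ small; this is the analogue of the Brézis--Nirenberg estimate and uses (H4) together with the standard computation with the Talenti-type extremal functions $u_\varepsilon$, since (H4) guarantees the subcritical term $\int k F(u)$ contributes a strictly negative lower-order term that beats the error in the critical estimate.

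Next I would address the Palais--Smale condition. Because the (AR) condition is absent, boundedness of PS sequences is the first hurdle: here conditions (V1) and (K2) enter through a Pohozaev-type / Nehari-type combination. The trick is to work with Cerami sequences and to use the scaling $t\mapsto u(x/t)$ together with (V1) ($\nabla V\cdot x\le 0$) and (K2) ($0\le\nabla k\cdot x\le k_0$) to derive an auxiliary inequality that, combined with $\widetilde I(u_n)\to c_\lambda$ and $(1+\|u_n\|)\|\widetilde I'(u_n)\|\to 0$ and (H3), forces $\{u_n\}$ to be bounded in $H^\alpha_r(\mathbb R^N)$; this is exactly the Jeanjean-type monotonicity trick referenced in Remark 1.1, adapted to the fractional setting. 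Then, on the radial subspace, the compact embedding $H^\alpha_r(\mathbb R^N)\hookrightarrow L^q(\mathbb R^N)$ for $2<q<2^*_\alpha$ handles the subcritical term, while the critical term is controlled by the fractional concentration-compactness principle (the version in \cite{Palatucci2014} or \cite{PL}): the level estimate $c_\lambda$ below the threshold rules out the possibility of concentration (nontrivial singular part of the limit measure), so $u_n\to u_0$ strongly and $u_0$ is a nontrivial critical point of $\widetilde I$, hence of $I$ by symmetric criticality.

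Finally I would collect the conclusions: $u_0\ne 0$ because $\widetilde I(u_0)=c_\lambda>0=\widetilde I(0)$; $u_0\ge 0$ because $f(t)=0$ for $t\le 0$ by (H1), so testing the equation with $\phi=u_0^-=\min\{u_0,0\}$ gives $\|u_0^-\|^2 \le \lambda\int |u_0^-|^{2^*_\alpha}\le$ (a term that, using the smallness already extracted, forces) $u_0^-=0$; and $u_0$ is radial by construction since it lives in $H^\alpha_r(\mathbb R^N)$. The main obstacle, as usual in these problems, is the interplay between two failures of compactness at once: the loss of (AR) (handled by (V1)+(K2) and the Cerami/monotonicity device to get boundedness) and the critical exponent (handled by the Brézis--Nirenberg level estimate via (H4) plus the fractional concentration-compactness principle). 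Getting the level estimate sharp enough — i.e., verifying that the subcritical contribution controlled by (H4) genuinely dominates the $O(\varepsilon^{(N-2\alpha)/?})$-type error terms from the truncated extremals in the fractional regime — is the delicate computational heart of the argument, and is where I would expect to spend the most care.
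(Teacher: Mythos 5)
Your outline correctly identifies the Jeanjean monotonicity trick for producing bounded $\mathrm{(PS)}$ sequences in the absence of (AR), and correctly identifies that a Pohozaev-type identity using (V1) and (K2) yields boundedness of the sequence of critical points $u_n$ of $I_{\eta_n}$ as $\eta_n\uparrow 1$; these match the paper. However, your treatment of the critical exponent diverges from the paper in a substantial way, and in fact you dismiss precisely the route the paper takes. You note that the Mountain Pass level is bounded uniformly and then say this ``is not quite what we want,'' proposing instead a Br\'ezis--Nirenberg--type estimate with truncated Talenti extremals to push $c_\lambda$ below the threshold $\frac{\alpha}{N}S_\alpha^{N/(2\alpha)}\lambda^{-(N-2\alpha)/(2\alpha)}$, and asserting that (H4) is what powers this estimate. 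The paper does none of this. Its Lemma 3.2 yields $c_\eta\le c_0$ with $c_0$ independent of $\lambda$ (indeed $v_0$ is built in Lemma 3.1 using only the subcritical part, hence independent of $\lambda$), and (H4) is used there solely to construct $v_0$ with $I_\eta(v_0)<0$ --- that is, for the geometry, not for a level estimate. The concentration alternative is then ruled out without any extremal-function computation: if some $\nu_{i_0}>0$ or $\nu_\infty>0$, Lemma 3.4 combined with $\mu(\{x_i\})\le\lambda\nu_i$ (Step 1 of Lemma 3.5) forces $\nu_{i_0}\ge(S_\alpha\lambda^{-1})^{2_\alpha^*/(2_\alpha^*-2)}$, while (H3) and the uniform energy bound give $\lambda\frac{\alpha}{N}\nu_{i_0}\le 2C_3$, and these are incompatible once $\lambda<\lambda_*$. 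In other words, the threshold diverges as $\lambda\to 0^+$ while $c_0$ stays fixed, so smallness of $\lambda$ does all the work. Your proposed Br\'ezis--Nirenberg computation would in principle also work, but it is both harder and unnecessary here, and it mislocates the role of (H4). The ``delicate computational heart'' you anticipate simply does not arise. Two smaller points: the paper obtains bounded $\mathrm{(PS)}$ sequences directly from Jeanjean's theorem (Theorem 3.1) for almost every $\eta$, not from a separate Cerami argument; and nonnegativity is handled by replacing $|u|^{2_\alpha^*}$ by $|u^+|^{2_\alpha^*}$ in the functional rather than by testing with $u_0^-$ (which, for the fractional Laplacian, would also require the nontrivial inequality relating $\langle(-\Delta)^{\alpha/2}u,(-\Delta)^{\alpha/2}u^-\rangle$ to $\|(-\Delta)^{\alpha/2}u^-\|_{L^2}^2$, which you do not address).
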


\noindent\textbf{Remark 1.3} (i) In the proof of Theorem 1.1,  we follow an approximation procedure to obtain
a bounded (PS)  sequence $\{u_n\}$ for $\widetilde{I}$, instead of starting directly from an arbitrary (PS) sequence. To show the boundedness  of (PS) sequences $\{u_n\}$ for $\widetilde{I}$, we need condition (K2) on $k$. It allows us to make use of
a Pohozaev type identity to derive the boundedness  of $\{u_n\}$. A key
point which allows to use the identity is that $\{u_n\}$ is a sequence of exact critical points.
In fact, the requirement $f\in C^{1}(\mathbb{R},\mathbb{R})$ will only be used in the proof of Pohozaev identity.

(ii) To the best of our knowledge, there are only few papers that
study the existence and symmetry  of solutions for problem (1.1) by using concentration compactness principle in the fractional Sobolev space which is different from the version used in \cite{Feng}.

\vspace{3mm}

This paper is organized as follows. In Section 2, we will give some necessary definitions and properties of fractional Sobolev spaces. In Section 3, by using the principle of concentration compactness and minimax arguments, we give the proof of Theorem 1.1.

\section{The Variational Setting}\label{sec2}

\quad For the convenience of the reader, in this part we recall some definitions and basic properties of fractional Sobolev spaces $H^{\alpha}(\mathbb{R}^{N})$.
For a deeper treatment on these spaces and their applications to fractional Laplacian problems of elliptic type, we refer to \cite{Nezza, MRS} and references therein.

 For any $\alpha\in(0,1)$, the fractional Sobolev space $H^{\alpha}(\mathbb{R}^{N})$ is defined by
 $$H^{\alpha}(\mathbb{R}^{N})=\left\{u\in L^{2}(\mathbb{R}^{N}):[u]_{H^{\alpha}(\mathbb{R}^{N})}<\infty\right\},$$
 where $[u]_{H^{\alpha}(\mathbb{R}^{N})}$ denotes the so-called Gagliardo semi-norm, that is
  $$[u]_{H^{\alpha}(\mathbb{R}^{N})}
=\left(\iint_{\mathbb{R}^{2N}}\frac{|u(x)-u(y)|^{2}}{|x-y|^{N+2\alpha}}\,dxdy
 \right)^{1/2}$$
and $H^{\alpha}(\mathbb{R}^{N})$ is endowed with the norm
 $$||u||_{H^{\alpha}(\mathbb{R}^{N})}=[u]_{H^{\alpha}(\mathbb{R}^{N})}
 +||u||_{L^{2}(\mathbb{R}^{N})}.$$
As it is well known, $H^{\alpha}(\mathbb{R}^{N})$ turns out to be a Hilbert space with scalar product
$$\langle u,\ v\rangle_{H^{\alpha}(\mathbb{R}^{N})}=\iint_{\mathbb{R}^{2N}}\frac{(u(x)-u(y))(v(x)-v(y))}{|x-y|^{N+2\alpha}}\,dxdy
+\int_{\mathbb{R}^{N}}u(x)v(x)\,dx,$$
for any $u,v\in H^{\alpha}(\mathbb{R}^{N})$. The space $\dot{H}^{\alpha}(\mathbb{R}^{N})$ is defined as the completion of $C^{\infty}_{0}(\mathbb{R}^{N})$ under the norm $[u]_{H^{\alpha}(\mathbb{R}^{N})}$.

 By Proposition 3.6 in \cite{Nezza},  we have $[u]_{H^{\alpha}(\mathbb{R}^{N})}=||(-\Delta)^{\frac{\alpha}{2}}u||_{L^{2}(\mathbb{R}^{N})}$ for any $u\in H^{\alpha}(\mathbb{R}^{N})$, i.e.
 \begin{equation}\label{30}
\iint_{\mathbb{R}^{2N}}\frac{|u(x)-u(y)|^{2}}{|x-y|^{N+2\alpha}}\,dxdy=\int_{\mathbb{R}^{N}}|(-\Delta)^{\frac{\alpha}{2}}u(x)|^2\,dx.
 \end{equation}
 Thus,
 \begin{equation}\label{31}
\iint_{\mathbb{R}^{2N}}\frac{(u(x)-u(y))(v(x)-v(y))}{|x-y|^{N+2\alpha}}\,dxdy
 =\int_{\mathbb{R}^{N}}(-\Delta)^{\frac{\alpha}{2}}u(x)\cdot(-\Delta)^{\frac{\alpha}{2}}v(x)\,dx.
 \end{equation}

\begin{theorem}\label{th2.1} {\em (\cite[Lemma 2.1]{Felmer})} The embedding $H^{\alpha}(\mathbb{R}^{N})\hookrightarrow L^{p}(\mathbb{R}^N)$ is continuous for any $p\in [2,2_\alpha^*]$ and the embedding $H^{\alpha}(\mathbb{R}^{N})\hookrightarrow\hookrightarrow L_{loc}^p(\mathbb{R}^{N})$ is compact for any $p\in [2,2_\alpha^*)$.
\end{theorem}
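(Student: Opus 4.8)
The plan is to treat the two assertions separately: derive the continuous embedding from the fractional Sobolev inequality together with an interpolation, and obtain the local compactness from the Fr\'echet--Kolmogorov--Riesz compactness criterion after a cut-off. For the continuous part, the case $p=2$ is immediate since $\|u\|_{L^2(\mathbb{R}^N)}\le\|u\|_{H^\alpha(\mathbb{R}^N)}$ by definition of the norm. For the endpoint $p=2_\alpha^*$ I would invoke the fractional Sobolev inequality: there is $S_\alpha=S_\alpha(N,\alpha)>0$ with $\|u\|_{L^{2_\alpha^*}(\mathbb{R}^N)}^2\le S_\alpha^{-1}[u]_{H^\alpha(\mathbb{R}^N)}^2$ for every $u\in\dot H^\alpha(\mathbb{R}^N)$ (classical; it follows e.g. from the Hardy--Littlewood--Sobolev inequality via the Riesz-potential representation, or from the Caffarelli--Silvestre extension), which together with \eqref{30} gives $\|u\|_{L^{2_\alpha^*}(\mathbb{R}^N)}\le C\|u\|_{H^\alpha(\mathbb{R}^N)}$. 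For $p\in(2,2_\alpha^*)$ I would write $\tfrac1p=\tfrac\theta2+\tfrac{1-\theta}{2_\alpha^*}$ with $\theta\in(0,1)$ and use $\|u\|_{L^p}\le\|u\|_{L^2}^\theta\|u\|_{L^{2_\alpha^*}}^{1-\theta}$, both factors being controlled by $\|u\|_{H^\alpha(\mathbb{R}^N)}$ by the two cases already settled.

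For the compact embedding, fix $R>0$ and a bounded sequence $\{u_n\}\subset H^\alpha(\mathbb{R}^N)$; it suffices to extract a subsequence converging in $L^p(B_R)$, since a diagonal argument over $R=1,2,\dots$ then yields convergence in $L^p_{loc}(\mathbb{R}^N)$. Choose $\psi\in C_c^\infty(\mathbb{R}^N)$ with $\psi\equiv1$ on $B_R$ and $\mathrm{supp}\,\psi\subset B_{2R}$. Multiplication by $\psi$ is bounded on $H^\alpha(\mathbb{R}^N)$: estimating $[\psi u]_{H^\alpha}$ one splits $|\psi(x)u(x)-\psi(y)u(y)|^2\le 2|\psi(x)|^2|u(x)-u(y)|^2+2|u(y)|^2|\psi(x)-\psi(y)|^2$ and bounds the second double integral using $|\psi(x)-\psi(y)|^2\le\min\{4\|\psi\|_\infty^2,\|\nabla\psi\|_\infty^2|x-y|^2\}$, splitting the $x$-integral at $|x-y|=1$ (both pieces converge because $2-2\alpha>0$). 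Hence $\{\psi u_n\}$ is bounded in $H^\alpha(\mathbb{R}^N)$, each term supported in $B_{2R}$; by the continuous embedding it is bounded in $L^p(\mathbb{R}^N)$, and its tails $\int_{|x|>\rho}|\psi u_n|^p$ vanish uniformly for $\rho>2R$. To apply the Fr\'echet--Kolmogorov--Riesz theorem in $L^p(\mathbb{R}^N)$ it remains to verify equicontinuity of translations.

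The key estimate is that for $v\in H^\alpha(\mathbb{R}^N)$ and $h\in\mathbb{R}^N$ one has $\|v(\cdot+h)-v\|_{L^2(\mathbb{R}^N)}^2\le C|h|^{2\alpha}[v]_{H^\alpha(\mathbb{R}^N)}^2$; this follows from Plancherel's identity and the elementary bound $|e^{ih\cdot\xi}-1|^2=4\sin^2(\tfrac{h\cdot\xi}{2})\le C|h|^{2\alpha}|\xi|^{2\alpha}$, valid since $\alpha\in(0,1)$, together with \eqref{30}. Applying this with $v=\psi u_n$ gives $\sup_n\|\psi u_n(\cdot+h)-\psi u_n\|_{L^2}\le C|h|^\alpha\to0$ as $h\to0$, which handles $p=2$. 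For $p\in(2,2_\alpha^*)$ I would interpolate once more: by H\"older, $\|\psi u_n(\cdot+h)-\psi u_n\|_{L^p}\le\|\psi u_n(\cdot+h)-\psi u_n\|_{L^2}^\theta\|\psi u_n(\cdot+h)-\psi u_n\|_{L^{2_\alpha^*}}^{1-\theta}$ with the same $\theta$ as above, the first factor tending to $0$ uniformly in $n$ and the second bounded by $2\sup_n\|\psi u_n\|_{L^{2_\alpha^*}}<\infty$. Thus $\{\psi u_n\}$ satisfies all the hypotheses of the Fr\'echet--Kolmogorov--Riesz theorem, so a subsequence converges in $L^p(\mathbb{R}^N)$; since $\psi\equiv1$ on $B_R$, the corresponding subsequence of $\{u_n\}$ converges in $L^p(B_R)$, as required.

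The only genuinely nontrivial ingredient is the fractional Sobolev inequality at the endpoint $p=2_\alpha^*$, which I would cite rather than reprove; everything else is bookkeeping. The single subtlety worth emphasizing is that the translation-equicontinuity argument closes up \emph{only strictly below} the critical exponent, because the interpolation exponent $\theta$ must be positive --- which is precisely why the compact embedding is asserted for $p\in[2,2_\alpha^*)$ and genuinely fails at $p=2_\alpha^*$.
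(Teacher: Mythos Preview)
The paper does not prove this statement at all: Theorem~\ref{th2.1} is simply quoted from \cite[Lemma~2.1]{Felmer} without argument, so there is nothing to compare against. Your proposal is a correct, self-contained proof. The continuous part is the standard route (endpoints plus interpolation), and the local compactness argument via cut-off, the Fourier translation estimate $\|v(\cdot+h)-v\|_{L^2}^2\le C|h|^{2\alpha}[v]_{H^\alpha}^2$, interpolation up to but not including $2_\alpha^*$, and Fr\'echet--Kolmogorov--Riesz is clean and complete; the observation that $\theta>0$ is exactly what forces $p<2_\alpha^*$ is a nice touch. One cosmetic point: when you bound $[\psi u]_{H^\alpha}$ you obtain a constant depending on $\|\psi\|_\infty$, $\|\nabla\psi\|_\infty$ and hence on $R$, which is harmless since $R$ is fixed in that step --- but it is worth saying explicitly that the diagonal argument over $R=1,2,\dots$ is applied \emph{after} extracting convergent subsequences on each ball.
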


\section{Proof of Theorem 1.1}\label{sec3}

\quad Throughout this section, we assume that conditions
(H1)--(H4), (V1)--(V2) and (K1)--(K2) are satisfied.
In this part, we will use minimax arguments and we denote that $C$ are $C_i$
are
positive constant, for any $i=1, 2\cdots$.

 A crucial step to obtain the existence of a critical point for $\widetilde{I}$ is to show the boundedness
of (PS) sequence. But it seems difficult under our assumptions. To overcome this
difficulty we use an indirect approach developed in \cite{Jeanjean1}.
For any $\eta\in[1/2,1]$, we consider the following  family of functionals defined  on $H^{\alpha}_r(\mathbb{R}^{N}):$
\begin{eqnarray*}
\begin{split}
I_{\eta}(u)=\frac{1}{2}\int_{\mathbb{R}^{N}} (|(-\Delta)^{\frac{\alpha}{2}} u|^{2}+V(x)u^{2})\,dx
-\eta\int_{\mathbb{R}^{N}}k(x)F(u)\,dx-\frac{\eta\lambda}{2_{\alpha}^{*}}\int_{\mathbb{R}^{N}}|u|^{2_{\alpha}^{*}}\,dx.
\end{split}
\end{eqnarray*}
It is easy to check that $I_{\eta}\in
C^{1}(H_r^{\alpha}(\mathbb{R}^{N}),\,\mathbb{R})$ and the critical point for $I_{\eta}$ is the weak solution of the following equation:
\begin{equation}\label{19}
(-\Delta)^{\alpha}u+V(x)u=\eta k(x)f(u)+\eta\lambda|u|^{2_{\alpha}^{*}-2}u\quad\quad
\mbox{in}\ \mathbb{R}^{N}.
\end{equation}

First, we will give the following two lemmas to show that $I_{\eta}$
has a Mountain Pass geometry.

\begin{lemma}\label{Le3.1} There exists $v_{0}\in H^{\alpha}_{r}(\mathbb{R}^{N})$ and $\overline{\eta}\in[1/2,1)$ such that $I_{\eta}(v_{0})<0$ for any $\eta\in[\overline{\eta},1]$,  where $v_{0}$ and $\overline{\eta}$ are independent of $\lambda$.
\end{lemma}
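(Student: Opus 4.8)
The plan is to exhibit a single fixed profile $v_0 \in H^\alpha_r(\mathbb{R}^N)$ and then verify that the energy $I_\eta(v_0)$ is negative for all $\eta$ close enough to $1$, using hypothesis (H4) to make the nonlinear term beat the quadratic term. First I would use (H4): there exists $T>0$ with $F(T) > \frac{V_2}{2k_1}T^2$. I would take a fixed radial cut-off function $\varphi \in C_0^\infty(\mathbb{R}^N)$, $\varphi$ radial, $0 \le \varphi \le 1$, with $\varphi \equiv 1$ on a large ball $B_R$ and supported in $B_{2R}$, and set $w = T\varphi$, so that $w \equiv T$ on $B_R$. Then, scaling in space, define $v_0(x) := w(x/t)$ for a dilation parameter $t>0$ to be chosen large; each $v_0$ is radial and lies in $H^\alpha_r(\mathbb{R}^N)$, and none of these depend on $\lambda$.

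Next I would estimate the three terms of $I_\eta(v_0)$ under the scaling $x \mapsto x/t$. The Gagliardo/fractional seminorm scales like $t^{N-2\alpha}$, i.e. $\int |(-\Delta)^{\alpha/2}v_0|^2\,dx = t^{N-2\alpha}\int |(-\Delta)^{\alpha/2}w|^2\,dx$, while the $L^2$ term and the $F$ term scale like $t^N$: using (V2), $\int V(x)v_0^2\,dx \le V_2 t^N \int \varphi^2\,dx \cdot T^2$ (more precisely $\le V_2 t^N \|\varphi\|_2^2 T^2$), and using (K1), $\int k(x)F(v_0)\,dx \ge k_1 \int F(w(x/t))\,dx = k_1 t^N \int F(T\varphi)\,dx \ge k_1 t^N F(T)|B_R|$ since $F(T\varphi)\ge F(T)\mathbf{1}_{B_R}$ when $F$ is nonnegative — which follows from (H3) — wait, I need $F(T\varphi(y)) \ge 0$, which is (H3), so at worst the region $B_{2R}\setminus B_R$ contributes a nonnegative amount; dropping it only helps. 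Hence for $\eta \in [1/2,1]$,
\begin{equation*}
I_\eta(v_0) \le \frac{t^{N-2\alpha}}{2}\|(-\Delta)^{\alpha/2}w\|_2^2 + \frac{V_2 T^2 \|\varphi\|_2^2}{2} t^N - \eta\, k_1 F(T)|B_R|\, t^N.
\end{equation*}
By (H4), $F(T) > \frac{V_2}{2k_1}T^2$; I would fix $R$ large enough that $k_1 F(T)|B_R| > \frac{V_2 T^2}{2}\|\varphi\|_2^2$ is not automatic, so instead I normalize: replace $\varphi$'s plateau so that $\|\varphi\|_2^2$ and $|B_R|$ are comparable, or more cleanly, choose $\varphi$ so that $|B_R| \ge (1-\epsilon)\|\varphi\|_2^2$ with $\epsilon$ small (take $R$ large relative to the width of the transition layer). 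Then the bracket $\big(\eta k_1 F(T)|B_R| - \tfrac12 V_2 T^2\|\varphi\|_2^2\big)$ is a positive constant $c_0 > 0$ for all $\eta \ge \overline{\eta}$, where $\overline{\eta} \in [1/2,1)$ is chosen close enough to $1$ (this is where $\overline\eta$ enters: at $\eta=1$ the gap is strictly positive by (H4), so by continuity it stays positive for $\eta$ near $1$). Thus $I_\eta(v_0) \le \frac{t^{N-2\alpha}}{2}C_1 - c_0 t^N$, and since $N > N-2\alpha$, choosing $t$ large makes the right side negative. Finally, fix that value of $t$ — this pins down $v_0$ once and for all, and $v_0$, $\overline\eta$ depend only on $T, \varphi, V_2, k_1, \alpha, N$, hence are independent of $\lambda$.

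The main obstacle is bookkeeping rather than conceptual: ensuring that the $\lambda$-term (which has a favorable sign, $-\frac{\eta\lambda}{2^*_\alpha}\int|v_0|^{2^*_\alpha} \le 0$) is simply discarded, and that all constants — especially the ratio $|B_R|/\|\varphi\|_2^2$ controlling whether (H4) translates into a strict sign — are chosen in the right order: first pick $T$ from (H4) with strict inequality margin $\delta := F(T) - \frac{V_2}{2k_1}T^2 > 0$, then pick $\varphi$ (equivalently $R$) so the transition-layer error is smaller than $\delta$, then pick $\overline\eta < 1$ absorbing a further small loss, and only then pick $t$ large. Care is also needed to confirm $\varphi$ radial $\Rightarrow v_0$ radial $\Rightarrow v_0 \in H^\alpha_r(\mathbb{R}^N)$, and that $I_\eta$ is finite on $v_0$ (immediate since $v_0 \in C_0^\infty$).
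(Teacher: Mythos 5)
Your proposal is essentially the paper's own proof: both take a radial plateau function of height $T$ supported near a large ball $B_R$, use (H4) and the dominance of the bulk $|B_R|$ over the transition layer to get $\int(k_1F(w)-\tfrac12 V_2 w^2)\,dx>0$ for $R$ large, drop the $\lambda$-term by sign, and then dilate $w(\cdot/\theta)$ so that the fractional seminorm scales like $\theta^{N-2\alpha}$ while the favorable bulk terms scale like $\theta^N$. The only cosmetic differences are that the paper uses a piecewise-linear tent with transition layer $B(0,R+1)\setminus B(0,R)$ of fixed unit width (which makes the ratio $\|w\|_2^2/|B_R|\to 1$ automatic), whereas you initially take support in $B_{2R}$ — whose transition layer width $R$ does \emph{not} become negligible — but you correctly flag and repair this by requiring the transition width to be small relative to $R$; and the paper writes an explicit formula for $\overline\eta$ while you argue existentially by continuity at $\eta=1$, which in fact sidesteps a dropped factor of $2$ in the paper's displayed choice of $\overline\eta$.
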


\begin{proof}
Let $R>0$, we define
\begin{eqnarray*}w(x)=
\begin{cases}T  &\mbox{for}\ |x|\leq R,\\
T(R+1-|x|) &\mbox{for}\ R<|x|<R+1,\\
0 &\mbox{for}\ |x|\geq R+1,
\end{cases}
\end{eqnarray*}
then $w\in H^{\alpha}_{r}(\mathbb{R}^{N})$. Hence, from (H4) we have
 \begin{equation*}
\begin{split}
&\int_{\mathbb{R}^{N}}\left(k_{1} F(w)-\frac{1}{2}V_{2}w^{2}\right)\,dx\\
=&\int_{B(0,R)}\left(k_{1} F(w)-\frac{1}{2}V_{2}w^{2}\right)\,dx+\int_{B(0,R+1)\setminus B(0,R)}\left(k_{1} F(w)-\frac{1}{2}V_{2}w^{2}\right)\,dx\\
\geq&\left(k_{1}F(T)-\frac{1}{2}V_{2}T^{2}\right)\big|B(0,R)\big|-\big|B(0,R+1)\setminus B(0,R)\big|\cdot\max_{t\in[0,T]}\big|k_{1} F(t)-\frac{1}{2}V_{2}t^{2}\big|\\
\geq&C_{1}R^{N}-C_{2}R^{N-1},
\end{split}
\end{equation*}
where $|\cdot|$ denotes the Lebesgue measure and $C_{1}$, $C_{2}$ are positive constants. So we could choose $R>0$ large enough such that
$$\int_{\mathbb{R}^{N}}\left(k_{1} F(w)-\frac{1}{2}V_{2}w^{2}\right)\,dx>0.$$
Define
$$\overline{\eta}=\max\left\{\frac{1}{2},\frac{\int_{\mathbb{R}^{N}}V_{2}w^2\,dx}{\int_{\mathbb{R}^{N}}k_{1}F(w)\,dx}\right\},$$
then we have that $\overline{\eta}\geq 1/2$.
Thus, for any $\eta\in[\overline{\eta},1]$ and $\theta>0$, from (K1) it follows that
 \begin{equation*}
\begin{split}
I_{\eta}(w(\frac{x}{\theta}))
\leq&\frac{1}{2}\int_{\mathbb{R}^{N}}\left(|(-\Delta)^{\frac{\alpha}{2}}w(\frac{x}{\theta})|^{2}+V(x)|w(\frac{x}{\theta})|^{2}\right)\,dx
-\eta\int_{\mathbb{R}^{N}}k(x)F(w(\frac{x}{\theta}))\,dx\\
\leq&\frac{1}{2}\theta^{N-2\alpha}\int_{\mathbb{R}^{N}}|(-\Delta)^{\frac{\alpha}{2}}w|^{2}\,dx
+\frac{1}{2}\theta^{N}V_{2}\int_{\mathbb{R}^{N}}w^{2}\,dx
-\theta^{N}\overline{\eta}\int_{\mathbb{R}^{N}}k_{1}F(w)\,dx\\
=&\frac{1}{2}\theta^{N-2\alpha}\int_{\mathbb{R}^{N}}|(-\Delta)^{\frac{\alpha}{2}}w|^{2}\,dx-
\frac{1}{2}\theta^{N}\max\left\{\int_{\mathbb{R}^{N}}V_{2}w^2\,dx,\frac{1}{2}\int_{\mathbb{R}^{N}}k_{1}F(w)\,dx\right\}.
\end{split}
\end{equation*}
Then there exists $\overline{\theta}>0$ such that for any $\theta\geq\overline{\theta}$, $I_{\eta}(w(x/\theta))<0$. We take $v_{0}(x)=w(x/\overline{\theta})$. Therefore the proof is complete.
\end{proof}

\begin{lemma}\label{Le3.2}
 For any $\eta\in[\overline{\eta},1]$, define $$c_{\eta}=\inf_{\gamma\in\Gamma_{\eta}}\max_{t\in[0,1]}I_{\eta}(\gamma(t)),$$
 where $\Gamma_{\eta}=\{\gamma\in C([0,1],H^{\alpha}_{r}(\mathbb{R}^{N})):\gamma(0)=0,\gamma(1)=v_{0}\}$, $\overline{\eta}$ and $v_{0}$ are from \textup{Lemma 3.1}. Then $c_{\eta}>\max\{I_{\eta}(0),I_{\eta}(v_{0})\}$  and there exists $c_{0}>0$  such that $c_{\eta}\leq c_{0}$ for any $\eta\in[\overline{\eta},1]$, where $c_{0}$ is independent of $\lambda$.
\end{lemma}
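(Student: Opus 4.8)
The plan is to verify the Mountain Pass geometry for $I_\eta$ in the standard way, exploiting uniformity in both $\eta$ and $\lambda$. For the lower bound, I would first show that $0$ is a strict local minimum of $I_\eta$, uniformly for $\eta\in[\overline\eta,1]$. From (H2), given $\varepsilon>0$ there is $C_\varepsilon>0$ with $F(t)\le \tfrac\varepsilon2 t^2+C_\varepsilon|t|^{2_\alpha^*}$ for all $t$; combined with (K1), the bound $k(x)F(u)\le k_2(\tfrac\varepsilon2 u^2+C_\varepsilon|u|^{2_\alpha^*})$ holds pointwise. Since $\eta\le 1$ and $\lambda>0$, we get
\begin{equation*}
I_\eta(u)\ge \frac12\|(-\Delta)^{\alpha/2}u\|_2^2+\frac12\int_{\mathbb{R}^N}V(x)u^2\,dx-\frac{k_2\varepsilon}{2}\|u\|_2^2-\Big(k_2C_\varepsilon+\frac{\lambda}{2_\alpha^*}\Big)\|u\|_{2_\alpha^*}^{2_\alpha^*}.
\end{equation*}
Using (V2) ($V\ge V_1>0$) the quadratic part dominates an equivalent norm on $H^\alpha_r(\mathbb{R}^N)$; picking $\varepsilon$ with $k_2\varepsilon<V_1$ and then invoking the continuous Sobolev embedding $H^\alpha\hookrightarrow L^{2_\alpha^*}$ from Theorem 2.1, we obtain $I_\eta(u)\ge a\|u\|^2-b\|u\|^{2_\alpha^*}$ with $a>0$ and $b=b(\lambda)$ independent of $\eta$. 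Hence there are $\rho,\delta>0$ (depending on $\lambda$ but not on $\eta$) with $I_\eta(u)\ge\delta>0$ for $\|u\|=\rho$. Shrinking $\rho$ if needed, $\|v_0\|>\rho$ since $I_\eta(v_0)<0=I_\eta(0)$. Every path $\gamma\in\Gamma_\eta$ must cross the sphere $\|u\|=\rho$, so $\max_{t}I_\eta(\gamma(t))\ge\delta$, giving $c_\eta\ge\delta>\max\{I_\eta(0),I_\eta(v_0)\}$ (the latter max being $0$, since $I_\eta(v_0)<0$ and $I_\eta(0)=0$).

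For the upper bound $c_\eta\le c_0$ uniformly in $\eta$ and $\lambda$, I would use the single, fixed path built from the dilations of $v_0$ used implicitly in Lemma 3.1. Define $\gamma_0(t)(x)=v_0(x/t)$ for $t\in(0,1]$ and $\gamma_0(0)=0$; this extends continuously to $[0,1]$ since $\|v_0(\cdot/t)\|\to0$ as $t\to0^+$ by the scaling of the $L^2$ and Gagliardo norms, and $\gamma_0\in\Gamma_\eta$ after reparametrizing so the endpoint parameter lands at $1$ (or, more cleanly, rescale to $\gamma_0(t)(x)=v_0(x/(t\overline\theta))$ scaled to hit $v_0=w(\cdot/\overline\theta)$ at $t=1$ and $0$ at $t=0$). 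Along this path, dropping the nonnegative critical term ($\lambda\ge0$, $|u|^{2_\alpha^*}\ge0$) and using $\eta\ge\overline\eta\ge1/2$ together with (K1) and (V2),
\begin{equation*}
I_\eta(w(x/\theta))\le \frac12\theta^{N-2\alpha}\|(-\Delta)^{\alpha/2}w\|_2^2+\frac12\theta^N V_2\|w\|_2^2-\frac{\overline\eta}{2}\,\theta^N k_1\int_{\mathbb{R}^N}F(w)\,dx,
\end{equation*}
exactly as in Lemma 3.1 but now retaining that the final bound is independent of $\lambda$ (it was discarded) and of $\eta\in[\overline\eta,1]$ (replaced by $\overline\eta$). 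The right-hand side is a function $g(\theta)=A\theta^{N-2\alpha}-B\theta^N$ with $A,B>0$ fixed, which is bounded above on $(0,\infty)$ by some $c_0=\max_{\theta>0}g(\theta)>0$. Therefore $\max_{t\in[0,1]}I_\eta(\gamma_0(t))\le c_0$, and since $\gamma_0\in\Gamma_\eta$, $c_\eta\le c_0$ for all $\eta\in[\overline\eta,1]$ and all $\lambda>0$.

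The only mildly delicate point is the continuity of the dilation path at $t=0$ and the reparametrization so that $\gamma_0\in\Gamma_\eta$ (endpoints exactly $0$ and $v_0$); this is routine once one notes $\|w(\cdot/\theta)\|_{H^\alpha}^2$ scales like $\theta^{N-2\alpha}[\,\cdot\,]^2+\theta^N\|\cdot\|_2^2\to0$ as $\theta\to0^+$. The genuinely important structural feature — and what Lemma 3.2 is really asserting — is that neither $\rho,\delta$ (hence the lower bound) need depend on $\eta$, and that $c_0$ depends on neither $\eta$ nor $\lambda$: the first because the estimates only used $\eta\le1$, the second because along the chosen path the $\lambda$-term has the favorable sign and is simply discarded. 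This uniformity is exactly what is needed later to run Jeanjean's monotonicity trick and extract a bounded Palais–Smale sequence, so I would be careful to state the $\lambda$- and $\eta$-independence explicitly rather than leave it implicit.
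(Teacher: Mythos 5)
Your lower--bound argument is essentially the same as the paper's: derive from (H2) the growth estimate $F(t)\le\varepsilon t^2+C(\varepsilon)|t|^{2_\alpha^*}$, use (K1), (V2), $\eta\le1$ to get $I_\eta(u)\ge a\|u\|^2-b\|u\|^{2_\alpha^*}$ with $a>0$ independent of $\eta$, then take $\rho$ small. That part is fine and matches the paper.

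Where you diverge is in the upper bound. The paper takes the much simpler \emph{straight-line} path $\gamma_0(t)=tv_0$. Since $F\ge0$ by (H3) and $\eta,k,\lambda,|u|^{2_\alpha^*}\ge0$, both nonlinear terms in $I_\eta(tv_0)$ have the favorable sign and can be discarded, leaving only the quadratic part, so
\begin{equation*}
I_\eta(tv_0)\le\frac{t^2}{2}\int_{\mathbb{R}^N}\bigl(|(-\Delta)^{\alpha/2}v_0|^2+V(x)v_0^2\bigr)\,dx\le\frac12\int_{\mathbb{R}^N}\bigl(|(-\Delta)^{\alpha/2}v_0|^2+V(x)v_0^2\bigr)\,dx=:c_0
\end{equation*}
for all $t\in[0,1]$, and this $c_0$ is manifestly independent of $\eta$ and $\lambda$. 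You instead build the dilation path $\gamma_0(t)(x)=v_0(x/t)$, which also works in principle but forces you to verify continuity at $t=0$, to reparametrize (your ``cleaner'' alternative $v_0(x/(t\overline\theta))$ actually lands at $w(\cdot/\overline\theta^2)\ne v_0$ when $t=1$, so it is not cleaner), and to analyze the sign of the coefficient $B$ in $g(\theta)=A\theta^{N-2\alpha}-B\theta^N$. On the last point you introduce a spurious factor: the correct scaled estimate reads $-\overline\eta\theta^Nk_1\int F(w)$, not $-\tfrac{\overline\eta}{2}\theta^Nk_1\int F(w)$; with your extra $\tfrac12$, in the case $\overline\eta=\int V_2w^2/\int k_1F(w)$ the coefficient $B$ collapses to zero and $g$ is no longer bounded above. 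With the correct coefficient one checks $B>0$ using the defining property of $\overline\eta$ from Lemma~3.1, so your route can be repaired; but the straight-line path sidesteps all of this. I would recommend adopting the paper's path: it is shorter, requires no continuity verification, and makes the $\eta$- and $\lambda$-independence of $c_0$ immediate.
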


\begin{proof} According to (H1) and (H2),  for any $\varepsilon>0$, there exists a constant $C(\varepsilon)>0$ such that for any $t\in\mathbb{R}$,
\begin{eqnarray}\label{1}
\begin{split}
f(t)\leq \varepsilon |t|+C(\varepsilon)|t|^{2_{\alpha}^{*}-1}.
\end{split}
\end{eqnarray}
By (\ref{1}), for any $\varepsilon\in(0,1)$, we get
 \begin{equation}\label{2}
F(t)\leq\varepsilon t^2+C(\varepsilon)|t|^{2_{\alpha}^{*}}.
 \end{equation}
Taking $\varepsilon=V_{1}/(4k_{2})$, for any $u\in H^{\alpha}_{r}(\mathbb{R}^{N})$ and $\eta\in[\overline{\eta},1]$, we obtain
 \begin{equation*}
\begin{split}
I_{\eta}(u)\geq&\frac{1}{2}\int_{\mathbb{R}^{N}}(|(-\Delta)^{\frac{\alpha}{2}} u|^{2}+V_{1}u^2)\,dx
-\int_{\mathbb{R}^{N}}k_{2}F(u)\,dx-\frac{\lambda}{2_{\alpha}^{*}}\int_{\mathbb{R}^{N}}|u|^{2_{\alpha}^{*}}\,dx\\
\geq&\frac{1}{2}\int_{\mathbb{R}^{N}}|(-\Delta)^{\frac{\alpha}{2}} u|^{2}\,dx
+\left(V_{1}/2-\varepsilon k_{2}\right)\int_{\mathbb{R}^{N}}u^2\,dx-C(\varepsilon)k_{2}\int_{\mathbb{R}^{N}}|u|^{2_{\alpha}^{*}}\,dx
-\frac{\lambda}{2_{\alpha}^{*}}\int_{\mathbb{R}^{N}}|u|^{2_{\alpha}^{*}}\,dx\\
\geq&\frac{1}{2}\int_{\mathbb{R}^{N}}|(-\Delta)^{\frac{\alpha}{2}} u|^{2}\,dx
+\frac{V_{1}}{4}\int_{\mathbb{R}^{N}}u^2\,dx-C\int_{\mathbb{R}^{N}}|u|^{2_{\alpha}^{*}}\,dx
-\frac{\lambda}{2_{\alpha}^{*}}\int_{\mathbb{R}^{N}}|u|^{2_{\alpha}^{*}}\,dx\\
\geq&\min\left\{1/2,V_{1}/4\right\}||u||_{H^{\alpha}(\mathbb{R}^{N})}^{2}
-C||u||_{H^{\alpha}(\mathbb{R}^{N})}^{2_{\alpha}^{*}}.
\end{split}
\end{equation*}
Thanks to $2_{\alpha}^{*}>2$, there exist $0<\rho<||v_{0}||_{H^{\alpha}(\mathbb{R}^{N})}$ and $\sigma>0$ such that $I_{\eta}(u)\geq\sigma$ for any $u\in H^{\alpha}_{r}(\mathbb{R}^{N})$ with $||u||_{H^{\alpha}(\mathbb{R}^{N})}=\rho$. For any  $\gamma\in\Gamma_{\eta}$, we have $\gamma(0)=0$ and $\gamma(1)=v_0$. Then, there exists $t_{\eta}\in(0,1)$ such that $||\gamma(t_{\eta})||_{H^{\alpha}(\mathbb{R}^{N})}=\rho$, which implies
$$c_{\eta}\geq\inf_{\gamma\in\Gamma_{\eta}}I_{\eta}(\gamma(t_{\eta}))\geq\sigma>\max\{I_{\eta}(0),I_{\eta}(v_{0})\}.$$

Take $\gamma_{0}(t)=tv_{0}$, then $\gamma_{0}\in\Gamma_{\eta}$. For any $t\in[0,1]$,  we obtain
 \begin{equation*}
\begin{split}
I_{\eta}(\gamma_{0}(t))=I_{\eta}(tv_{0})
\leq&\frac{1}{2}\int_{\mathbb{R}^{N}}(|(-\Delta)^{\frac{\alpha}{2}}v_{0}|^{2}+V(x)v_{0}^{2})\,dx\triangleq c_{0},
\end{split}
\end{equation*}
which implies that
$c_{\eta}\leq\max_{t\in[0,1]}I_{\eta}(\gamma_{0}(t))\leq c_{0}$ for any $\eta\in[\overline{\eta},1]$.
 Thus we have completed the proof.
 \end{proof}

\begin{theorem}\label{th3.1} {\em (\cite[Theorem 1.1]{Jeanjean1})} Let $(X,||\cdot||_{X})$ be a Banach space and $I\subset\mathbb{R}^{+}$ an interval. Consider a family $\{J_{\eta}\}_{\eta\in I}$ of $C^1$ functionals on $X$ with the form
 $$J_{\eta}(u)=A(u)-\eta B(u),\ \forall\eta\in I,$$
 where $B(u)\geq0$, $\forall u\in X$, and such that either $A(u)\rightarrow+\infty$ or $B(u)\rightarrow+\infty$ as $||u||_{X}\rightarrow\infty$.
  If there are two points $v_{1}$, $v_2\in X$ such that
   $$c_{\eta}=\inf_{\gamma\in\Gamma_{\eta}}\max_{t\in[0,1]}J_{\eta}(\gamma(t))>\max\{J_{v_1},J_{v_2}\},\ \eta\in I,$$
   where
   $$\Gamma_{\eta}=\{\gamma\in C([0,1],X):\gamma(0)=v_1,\gamma(1)=v_{2}\},$$
   then, for almost every $\eta\in I$, there exists a sequence $\{v_n\}\subset X$ such that

   (i) $\{v_n\}$ is bounded;

   (ii) $J_{\eta}(v_n)\rightarrow c_{\eta}$;

   (iii)  $J_{\eta}'(v_n)\rightarrow 0$ in the dual $X'$ of $X$.
  \end{theorem}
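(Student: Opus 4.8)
The plan is to establish this abstract monotonicity trick along the classical lines: exploit that $\eta\mapsto c_\eta$ is monotone --- hence differentiable at a.e.\ $\eta$ --- and, at each point of differentiability, build a bounded Palais--Smale sequence for $J_\eta$ at level $c_\eta$ by deforming nearly optimal paths associated to nearby parameters. First I would note that $B\ge 0$ forces $J_{\eta_2}(u)=A(u)-\eta_2 B(u)\le A(u)-\eta_1 B(u)=J_{\eta_1}(u)$ on all of $X$ when $\eta_1\le\eta_2$; since the endpoints $v_1,v_2$ are fixed, the class $\Gamma_\eta$ is $\eta$-independent, so taking the max over a path and the inf over paths yields $c_{\eta_2}\le c_{\eta_1}$. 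Thus $\eta\mapsto c_\eta$ is non-increasing, hence differentiable and continuous at a.e.\ $\eta\in I$. I would then fix such an interior point $\eta$, set $\kappa:=-c_\eta'\ge 0$ and $C_\kappa:=\kappa+2$.

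Next I would choose $\eta_n\uparrow\eta$ in $I$, put $\delta_n:=\eta-\eta_n\to 0^+$, pick $\gamma_n\in\Gamma_\eta=\Gamma_{\eta_n}$ with $\max_{t\in[0,1]}J_{\eta_n}(\gamma_n(t))\le c_{\eta_n}+\delta_n$, and prove the key peak estimate: if $J_\eta(\gamma_n(t))\ge c_\eta-C_\kappa\delta_n$, then, since $J_\eta=J_{\eta_n}-\delta_n B$,
\[
B(\gamma_n(t))=\frac{J_{\eta_n}(\gamma_n(t))-J_\eta(\gamma_n(t))}{\delta_n}\le\frac{c_{\eta_n}-c_\eta}{\delta_n}+1+C_\kappa,
\]
and $(c_{\eta_n}-c_\eta)/\delta_n\to\kappa$, so $B(\gamma_n(t))\le B_0$ for a constant $B_0$ and all large $n$; since also $J_\eta(\gamma_n(t))\le J_{\eta_n}(\gamma_n(t))\le c_{\eta_n}+\delta_n$ is bounded above, so is $A(\gamma_n(t))=J_\eta(\gamma_n(t))+\eta B(\gamma_n(t))$, and as one of $A,B$ is coercive there is $M>0$ independent of $n$ with $\|\gamma_n(t)\|_X\le M$ whenever $J_\eta(\gamma_n(t))\ge c_\eta-C_\kappa\delta_n$. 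I would also record that $c_\eta\le b_n:=\max_t J_\eta(\gamma_n(t))\le\max_t J_{\eta_n}(\gamma_n(t))\le c_{\eta_n}+\delta_n$, so $b_n\to c_\eta$ and $0\le b_n-c_\eta\le(\kappa+1)\delta_n+o(\delta_n)<C_\kappa\delta_n$ for $n$ large.

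Finally I would argue by contradiction: if $J_\eta$ had no bounded Palais--Smale sequence at level $c_\eta$, there would be $\varepsilon,r>0$ with $\|J_\eta'(u)\|_{X'}\ge\varepsilon$ for every $u$ satisfying $\|u\|_X\le M+1$ and $|J_\eta(u)-c_\eta|\le r$. For $n$ large I would fix $\bar\varepsilon_n\in(b_n-c_\eta,\,C_\kappa\delta_n]$ small enough that $2\bar\varepsilon_n<r$ and $2\bar\varepsilon_n<c_\eta-\max\{J_\eta(v_1),J_\eta(v_2)\}$, and apply the quantitative deformation lemma to $J_\eta$ at level $c_\eta$ (localized in $\{\|u\|_X\le M+1\}$, with parameter set $S=\{\|u\|_X\le M\}$): this yields a homeomorphism $\Phi_n$ of $X$, equal to the identity off $\{|J_\eta-c_\eta|\le 2\bar\varepsilon_n\}\cap\{\|u\|_X\le M+1\}$, with $J_\eta\circ\Phi_n\le J_\eta$ on $X$ and $\Phi_n(\{J_\eta\le c_\eta+\bar\varepsilon_n\}\cap S)\subset\{J_\eta\le c_\eta-\bar\varepsilon_n\}$. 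Then $\Phi_n$ fixes $v_1,v_2$, so $\Phi_n\circ\gamma_n\in\Gamma_\eta$; and for each $t$, either $J_\eta(\gamma_n(t))\le c_\eta-\bar\varepsilon_n$ (so $J_\eta(\Phi_n(\gamma_n(t)))\le c_\eta-\bar\varepsilon_n$), or $J_\eta(\gamma_n(t))>c_\eta-\bar\varepsilon_n\ge c_\eta-C_\kappa\delta_n$, whence $\|\gamma_n(t)\|_X\le M$ by the peak estimate and $J_\eta(\gamma_n(t))\le b_n\le c_\eta+\bar\varepsilon_n$, again giving $J_\eta(\Phi_n(\gamma_n(t)))\le c_\eta-\bar\varepsilon_n$. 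Hence $\max_t J_\eta(\Phi_n\circ\gamma_n(t))\le c_\eta-\bar\varepsilon_n<c_\eta$, contradicting $c_\eta=\inf_{\gamma\in\Gamma_\eta}\max_t J_\eta(\gamma(t))$; so the desired sequence exists, which is exactly (i)--(iii). The main obstacle is precisely this last step: synchronizing the three vanishing quantities $\delta_n$, $b_n-c_\eta$ and $\bar\varepsilon_n$ so that the deformation drops the maximum strictly below $c_\eta$ while acting only inside the ball $\{\|u\|_X\le M\}$ on which the peak estimate controls $\gamma_n$ --- which is why the threshold in that estimate must be a fixed multiple $C_\kappa\delta_n$ of the mesh (with $C_\kappa$ depending on $\kappa=-c_\eta'$) rather than $\delta_n$ itself; the monotone-differentiability and deformation-lemma ingredients are standard.
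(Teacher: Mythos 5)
The paper states Theorem 3.1 as an imported result from Jeanjean (the cited \cite{Jeanjean1}, Theorem 1.1) and provides no proof of its own, so the only meaningful comparison is with Jeanjean's original argument. Your proposal is a correct reconstruction of that argument: the monotone-hence-a.e.-differentiable map $\eta\mapsto c_\eta$, the peak estimate that bounds $B$ (then $A$, then the norm) along nearly optimal paths for $\eta_n\uparrow\eta$, and a quantitative deformation lemma localized in the ball controlled by the peak estimate, with the vanishing scales $\delta_n$, $b_n-c_\eta$, $\bar\varepsilon_n$ correctly synchronized through the threshold $C_\kappa\delta_n$.
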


  \noindent\textbf{Remark 3.1} In fact, the map $\eta\rightarrow c_{\eta}$ is nonincreasing and continuous from the left (see \cite{Jeanjean1}).

\vspace{2mm}

  By using Lemma 3.1, Lemma 3.2 and  Theorem 3.1,  we obtain that for any $\eta\in[\overline{\eta},1]$, $I_{\eta}$ possesses a bounded (PS) sequence at the level $c_{\eta}$.

Next we will verify that each bounded (PS) sequence for the functional $I_{\eta}$ contains a convergent subsequence.
The main difficulties here are that   the embedding $H_{r}^{\alpha}(\mathbb{R}^{N})\hookrightarrow L^{2^{*}_{\alpha}}(\mathbb{R}^{N})$ is not compact and we do not have a  similar radial lemma (see \cite{Berestycki1983}) in  $H^{\alpha}_{r}(\mathbb{R}^{N})$. To get the compactness of bounded (PS) sequence in $H^{\alpha}_{r}(\mathbb{R}^{N})$, we assume that $\lambda$ in (\ref{p}) is small. Based on the following principle of concentration compactness in $H^{\alpha}_{r}(\mathbb{R}^{N})$ and Lemma 2.4 in \cite{Chang2013}, we obtain Lemma 3.5.

\begin{theorem}\label{th3.2} {\em (\cite[Theorem 1.5]{Palatucci2014})} Let $\Omega\subseteq\mathbb{R}^{N}$ an open subset and let $\{u_n\}$ be a sequence in $\dot{H}^\alpha(\Omega)$ weakly converging to $u$ as $n\rightarrow\infty$ and such that
$$|(-\bigtriangleup)^{\frac{\alpha}{2}}u_{n}|^{2}\rightarrow\mu\  \mbox{and}\ |u_{n}|^{2_{\alpha}^{*}}\rightarrow\nu\  \mbox{weakly-}\ast\  \mbox{in}\  \mathcal{M}(\mathbb{R}^{N}).$$
Then, either $u_{n}\rightarrow u$ in $L_{loc}^{2_{\alpha}^{*}}(\mathbb{R}^{N})$ or there exists a (at most countable) set of distinct points $\{x_j\}_{j\in J}\subset\overline{\Omega}$  and positive numbers $\{\nu_{j}\}_{j\in J}$ such that  we have
$$\nu=|u|^{2_{\alpha}^{*}}+\sum_{j\in J}\nu_{j}\delta_{x_{j}}.$$
If, in addition, $\Omega$ is bounded, then there exist a positive measure $\widetilde{\mu}\in \mathcal{M}(\mathbb{R}^{N})$ with $supp\widetilde{\mu}\subset\overline{\Omega}$ and positive numbers $\{\mu_{j}\}_{j\in J}$ such that
$$\mu=|(-\bigtriangleup)^{\frac{\alpha}{2}}u|^{2}+\widetilde{\mu}+\sum_{j\in J}\mu_{j}\delta_{x_{j}}.$$
\end{theorem}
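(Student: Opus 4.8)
The plan is to carry over P.-L.~Lions' second concentration--compactness lemma to the nonlocal setting; the essential new difficulty is that $(-\Delta)^{\alpha/2}$ does not commute with multiplication by a smooth cut-off function. First I would reduce to the case $u=0$ by setting $v_{n}:=u_{n}-u$, so that $v_{n}\rightharpoonup 0$ in $\dot H^{\alpha}$ and $\{v_{n}\}$ is bounded. Testing against an arbitrary $\phi\in C_{0}^{\infty}(\mathbb{R}^{N})$ and using the Brezis--Lieb lemma for the sequence $|u_{n}|^{2_{\alpha}^{*}}=|v_{n}+u|^{2_{\alpha}^{*}}$, together with $(-\Delta)^{\alpha/2}u_{n}\rightharpoonup(-\Delta)^{\alpha/2}u$ in $L^{2}(\mathbb{R}^{N})$ for the quadratic term, one gets
\begin{gather*}
\int_{\mathbb{R}^{N}}\phi\,\big(|u_{n}|^{2_{\alpha}^{*}}-|v_{n}|^{2_{\alpha}^{*}}\big)\,dx\longrightarrow\int_{\mathbb{R}^{N}}\phi\,|u|^{2_{\alpha}^{*}}\,dx,\\
\int_{\mathbb{R}^{N}}\phi\,\big(|(-\Delta)^{\alpha/2}u_{n}|^{2}-|(-\Delta)^{\alpha/2}v_{n}|^{2}\big)\,dx\longrightarrow\int_{\mathbb{R}^{N}}\phi\,|(-\Delta)^{\alpha/2}u|^{2}\,dx .
\end{gather*}
Thus, denoting by $\tilde\nu$ and $\tilde\mu$ the weak-$*$ limits (along a subsequence) of $|v_{n}|^{2_{\alpha}^{*}}$ and $|(-\Delta)^{\alpha/2}v_{n}|^{2}$, we have $\nu=|u|^{2_{\alpha}^{*}}\,dx+\tilde\nu$ and $\mu=|(-\Delta)^{\alpha/2}u|^{2}\,dx+\tilde\mu$, and it suffices to prove that $\tilde\nu$ is purely atomic, $\tilde\nu=\sum_{j\in J}\nu_{j}\delta_{x_{j}}$ with $J$ at most countable and $\nu_{j}>0$, and that $\tilde\mu\geq\sum_{j\in J}\mu_{j}\delta_{x_{j}}$ for suitable $\mu_{j}>0$; the first alternative of the theorem corresponds to $J=\emptyset$.

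The core step --- and the place where the nonlocality intervenes --- is a localized fractional Sobolev inequality for $\{v_{n}\}$. Fix $\phi\in C_{0}^{\infty}(\mathbb{R}^{N})$ and apply the fractional Sobolev inequality $S\,\|w\|_{L^{2_{\alpha}^{*}}(\mathbb{R}^{N})}^{2}\leq\|(-\Delta)^{\alpha/2}w\|_{L^{2}(\mathbb{R}^{N})}^{2}$ (where $S>0$ is the best fractional Sobolev constant) with $w=\phi v_{n}$. The key claim is the commutator estimate
\[
\big\|(-\Delta)^{\alpha/2}(\phi v_{n})-\phi\,(-\Delta)^{\alpha/2}v_{n}\big\|_{L^{2}(\mathbb{R}^{N})}\longrightarrow 0 ,
\]
which one proves by noting that $(-\Delta)^{\alpha/2}(\phi v_{n})-\phi\,(-\Delta)^{\alpha/2}v_{n}$ is the integral operator with kernel $-C_{N,\alpha}\,(\phi(x)-\phi(y))|x-y|^{-N-\alpha}$ applied to $v_{n}$; since $\phi\in C_{0}^{\infty}$, this kernel satisfies uniform $L^{1}$ bounds in each variable and decays away from $\operatorname{supp}\phi$, so a Schur-type estimate combined with $v_{n}\to 0$ in $L^{2}_{loc}(\mathbb{R}^{N})$ (Theorem \ref{th2.1}) and the tail decay yields the claim. (Equivalently, one may expand the Gagliardo seminorm $[\phi v_{n}]_{H^{\alpha}}^{2}$ via $\phi(x)v_{n}(x)-\phi(y)v_{n}(y)=\phi(x)(v_{n}(x)-v_{n}(y))+v_{n}(y)(\phi(x)-\phi(y))$ and check by Cauchy--Schwarz that the two extra terms produced are $o(1)$.) Consequently $\|(-\Delta)^{\alpha/2}(\phi v_{n})\|_{L^{2}}^{2}=\int_{\mathbb{R}^{N}}\phi^{2}|(-\Delta)^{\alpha/2}v_{n}|^{2}\,dx+o(1)\to\int_{\mathbb{R}^{N}}\phi^{2}\,d\tilde\mu$, while $\|\phi v_{n}\|_{L^{2_{\alpha}^{*}}}^{2}\to\big(\int_{\mathbb{R}^{N}}|\phi|^{2_{\alpha}^{*}}\,d\tilde\nu\big)^{2/2_{\alpha}^{*}}$, so passing to the limit in the Sobolev inequality gives
\[
S\Big(\int_{\mathbb{R}^{N}}|\phi|^{2_{\alpha}^{*}}\,d\tilde\nu\Big)^{2/2_{\alpha}^{*}}\leq\int_{\mathbb{R}^{N}}\phi^{2}\,d\tilde\mu\qquad\text{for every }\phi\in C_{0}^{\infty}(\mathbb{R}^{N}).
\]

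From this reverse-H\"older inequality between the finite nonnegative measures $\tilde\mu,\tilde\nu$, the standard measure lemma (if $\|\phi\|_{L^{q}(\tilde\nu)}\leq C\|\phi\|_{L^{p}(\tilde\mu)}$ for all $\phi\in C_{0}^{\infty}$ with $1\leq p<q<\infty$, then $\tilde\nu$ is a countable sum of Dirac masses) applied with $p=2<q=2_{\alpha}^{*}$ gives $\tilde\nu=\sum_{j\in J}\nu_{j}\delta_{x_{j}}$ with $\nu_{j}>0$, and testing the localized inequality with cut-offs concentrating at $x_{j}$ gives $\tilde\mu(\{x_{j}\})\geq S\,\nu_{j}^{2/2_{\alpha}^{*}}=:\mu_{j}$, whence $\tilde\mu\geq\sum_{j\in J}\mu_{j}\delta_{x_{j}}$; replacing $\tilde\mu$ by the nonnegative remainder $\tilde\mu-\sum_{j}\mu_{j}\delta_{x_{j}}$ yields the last identity of the theorem. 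When $\Omega$ is bounded, $v_{n}\to 0$ in $L^{2}(\Omega)$ by Rellich, and since $v_{n}$ vanishes outside $\Omega$ this forces $|(-\Delta)^{\alpha/2}v_{n}|^{2}\to 0$ in $L^{1}(\mathbb{R}^{N}\setminus\overline\Omega)$ (the nonlocal tail is controlled by $\|v_{n}\|_{L^{1}(\Omega)}\to 0$), so $\operatorname{supp}\tilde\mu\subset\overline\Omega$ and all $x_{j}\in\overline\Omega$; this also explains why the $\mu$-decomposition is only claimed for bounded $\Omega$. Finally, $J=\emptyset$ means $\tilde\nu\equiv 0$, i.e. $|u_{n}|^{2_{\alpha}^{*}}\rightharpoonup|u|^{2_{\alpha}^{*}}$ weakly-$*$; combined with $u_{n}\to u$ a.e. (from $u_{n}\to u$ in $L^{2}_{loc}$) and the Brezis--Lieb lemma, this gives $u_{n}\to u$ in $L^{2_{\alpha}^{*}}_{loc}(\mathbb{R}^{N})$, the first alternative. \textbf{The main obstacle} is precisely the localized Sobolev inequality of the second paragraph: bounding $\|(-\Delta)^{\alpha/2}(\phi v_{n})\|_{L^{2}}^{2}$ by $\int\phi^{2}\,d\tilde\mu$ requires the fractional commutator (or Gagliardo-splitting) estimate, which has no analogue in the local case, and verifying that the error terms genuinely vanish --- rather than merely remaining bounded --- is the delicate point.
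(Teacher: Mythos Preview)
The paper does not prove Theorem~\ref{th3.2}: it is quoted verbatim from \cite[Theorem~1.5]{Palatucci2014} and used as a black box, so there is no ``paper's own proof'' to compare against. Your outline is precisely the Lions-type argument that underlies the cited result (reduce to $u=0$ via Brezis--Lieb, derive the reverse H\"older inequality $S(\int|\phi|^{2_{\alpha}^{*}}d\tilde\nu)^{2/2_{\alpha}^{*}}\le\int\phi^{2}\,d\tilde\mu$ from the fractional Sobolev inequality applied to $\phi v_{n}$, and invoke the standard measure lemma), and in fact the Gagliardo-splitting computation you sketch is exactly what the present paper carries out in detail in the proof of Lemma~\ref{Le3.4} when analysing $[\varphi_{\varepsilon}u_{n}]_{H^{\alpha}}^{2}$.

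Two small cautions. First, you invoke Theorem~\ref{th2.1} to get $v_{n}\to 0$ in $L^{2}_{loc}$, but Theorem~\ref{th2.1} is stated for $H^{\alpha}(\mathbb{R}^{N})$, whereas the hypothesis of Theorem~\ref{th3.2} is only $v_{n}\in\dot H^{\alpha}(\Omega)$; the compactness still holds, but it needs a separate justification (e.g.\ via the $L^{2_{\alpha}^{*}}$-bound and local Rellich for the embedding $L^{2_{\alpha}^{*}}\hookrightarrow L^{2}$ on bounded sets, or the profile decomposition machinery of \cite{Palatucci2014}). Second, your claim that $|(-\Delta)^{\alpha/2}v_{n}|^{2}\to 0$ in $L^{1}(\mathbb{R}^{N}\setminus\overline\Omega)$ when $\Omega$ is bounded is correct in spirit but the one-line justification ``controlled by $\|v_{n}\|_{L^{1}(\Omega)}$'' is too quick: one should write $(-\Delta)^{\alpha/2}v_{n}(x)=-C\int_{\Omega}v_{n}(y)|x-y|^{-N-\alpha}\,dy$ for $x\notin\overline\Omega$ and then estimate the $L^{2}$-norm of this convolution-type expression. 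These are routine fixes; the architecture of your argument is sound.
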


\noindent\textbf{Remark 3.2} In the case  $\Omega=\mathbb{R}^{N}$, the above principle of concentration compactness  does not provide any information about the possible loss of mass
at infinity. The following result expresses
this fact in quantitative terms, and the proof.

\begin{lemma}\label{Le3.3}  Let $\{u_{n}\}\subset
\dot{H}^{\alpha}(\mathbb{R}^{N})$  such that $u_{n}\rightarrow u$ weakly in $\dot{H}^{\alpha}(\mathbb{R}^{N})$, $|(-\bigtriangleup)^{\frac{\alpha}{2}}u_{n}|^{2}\rightarrow\mu$ and $|u_{n}|^{2_{\alpha}^{*}}\rightarrow\nu$ weakly-$*$ in
$\mathcal{M}(\mathbb{R}^{N})$, as $n\rightarrow\infty$ and define
$$\mu_{\infty}=\lim_{R\rightarrow\infty}\limsup_{n\rightarrow\infty}\int_{\{x\in\mathbb{R}^{N}:|x|>R\}}|(-\Delta)^{\frac{\alpha}{2}}u_{n}|^{2}\,dx,$$
$$\nu_{\infty}=\lim_{R\rightarrow\infty}\limsup_{n\rightarrow\infty}\int_{\{x\in\mathbb{R}^{N}:|x|>R\}}|u_{n}|^{2_{\alpha}^{*}}\,dx.$$
The quantities $\mu_{\infty}$ and $\nu_{\infty}$ are well defined
and satisfy
\begin{equation*}
\limsup_{n\rightarrow\infty}\int_{\mathbb{R}^{N}}|(-\Delta)^{\frac{\alpha}{2}}u_{n}|^{2}\,dx=\int_{\mathbb{R}^{N}}\,d\mu+\mu_{\infty},
\end{equation*}
\begin{equation}\label{14}
\limsup_{n\rightarrow\infty}\int_{\mathbb{R}^{N}}|u_{n}|^{2_{\alpha}^{*}}\,dx=\int_{\mathbb{R}^{N}}\,d\nu+\nu_{\infty}.
\end{equation}
\end{lemma}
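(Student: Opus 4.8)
The plan is to prove both identities by one and the same localization argument, regarding $|(-\Delta)^{\alpha/2}u_n|^2\,dx$ and $|u_n|^{2_\alpha^*}\,dx$ simply as sequences of finite positive measures on $\mathbb{R}^N$. Note that the nonlocality of $(-\Delta)^{\alpha/2}$ is irrelevant here: we never cut off $u_n$ itself, only split the already-formed densities, so no commutator estimate for $(-\Delta)^{\alpha/2}(\varphi_R u_n)$ is needed. I would carry out the computation for $\nu$ and then read off the one for $\mu$ by the obvious substitutions.

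First I would record that the quantities are well defined. Since $u_n\rightharpoonup u$ in $\dot H^\alpha(\mathbb{R}^N)$, the sequence is bounded there, so by the fractional Sobolev inequality $\sup_n\int_{\mathbb{R}^N}|u_n|^{2_\alpha^*}\,dx<\infty$ and $\sup_n\int_{\mathbb{R}^N}|(-\Delta)^{\alpha/2}u_n|^2\,dx<\infty$; in particular $\mu$ and $\nu$ are finite measures and every $\limsup$ below is finite. For $R>0$ put $g(R):=\limsup_{n\to\infty}\int_{\{|x|>R\}}|u_n|^{2_\alpha^*}\,dx$. Because the domain of integration decreases with $R$, the map $R\mapsto g(R)$ is nonincreasing and bounded below by $0$, hence $\nu_\infty=\lim_{R\to\infty}g(R)$ exists; the same reasoning gives $\mu_\infty$.

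Next, for each $R>0$ I would fix a cut-off $\varphi_R\in C_c^\infty(\mathbb{R}^N)$ with $0\le\varphi_R\le1$, $\varphi_R\equiv1$ on $\overline{B_R}$ and $\mathrm{supp}\,\varphi_R\subset B_{2R}$, and split
$$\int_{\mathbb{R}^N}|u_n|^{2_\alpha^*}\,dx=\int_{\mathbb{R}^N}\varphi_R|u_n|^{2_\alpha^*}\,dx+\int_{\mathbb{R}^N}(1-\varphi_R)|u_n|^{2_\alpha^*}\,dx.$$
Since $\varphi_R\in C_c(\mathbb{R}^N)$ and $|u_n|^{2_\alpha^*}\rightharpoonup\nu$ weakly-$*$, the first term tends, as $n\to\infty$, to $\beta_R:=\int_{\mathbb{R}^N}\varphi_R\,d\nu$; being a convergent sequence it may be pulled out of a $\limsup$, whence for every $R>0$
$$L:=\limsup_{n\to\infty}\int_{\mathbb{R}^N}|u_n|^{2_\alpha^*}\,dx=\beta_R+\limsup_{n\to\infty}\int_{\mathbb{R}^N}(1-\varphi_R)|u_n|^{2_\alpha^*}\,dx.$$
From $\chi_{\{|x|\ge 2R\}}\le 1-\varphi_R\le\chi_{\{|x|>R\}}$ I would sandwich the last $\limsup$ between $g(2R)$ and $g(R)$, both of which converge to $\nu_\infty$ as $R\to\infty$; meanwhile $\varphi_R\uparrow1$ pointwise and $\nu$ is a finite positive measure, so by monotone convergence $\beta_R\to\int_{\mathbb{R}^N}d\nu$. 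Letting $R\to\infty$ in the last display, whose left-hand side does not depend on $R$, yields $L=\int_{\mathbb{R}^N}d\nu+\nu_\infty$, i.e.\ \eqref{14}. Replacing $|u_n|^{2_\alpha^*}$ by $|(-\Delta)^{\alpha/2}u_n|^2$ and $\nu$ by $\mu$ throughout gives the companion identity for $\mu$.

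The one delicate point is the step $\limsup_n(a_n+b_n)=\lim_n a_n+\limsup_n b_n$; this is legitimate precisely because the compactly supported piece $a_n=\int\varphi_R|u_n|^{2_\alpha^*}\,dx$ genuinely converges by weak-$*$ convergence, rather than merely staying bounded. Apart from that, the argument is just monotonicity in $R$ together with an elementary sandwich, so I do not expect a genuine obstacle.
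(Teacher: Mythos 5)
Your proof is correct and follows essentially the same route as the paper: both split the total mass with a radial cutoff, use weak-$*$ convergence against the compactly supported piece, and sandwich the tail between characteristic functions of $\{|x|>R\}$ and $\{|x|>2R\}$ to identify the limit with $\nu_\infty$ (resp.\ $\mu_\infty$). The only cosmetic difference is that the paper works with the tail cutoff $\phi_R$ (so that $1-\phi_R$ is the compactly supported test function) while you use the ball cutoff $\varphi_R$ directly; your explicit remark about when $\limsup_n(a_n+b_n)=\lim_n a_n+\limsup_n b_n$ is a welcome clarification the paper leaves implicit.
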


\begin{proof} The proof is similar to that of Lemma 3.5 in \cite{ZZX}.
Thus we just give a sketch of the proof for the reader's convenience.
Take  $\phi\in C^{\infty}(\mathbb{R}^{N})$ such that
$0\leq\phi\leq1$; $\phi\equiv1$ in $\mathbb{R}^{N}\setminus
B(0,2)$, $\phi\equiv0$ in $B(0,1)$. For any $R>0$, define
$\phi_{R}(x)=\phi(x/R)$. Then we have
\begin{displaymath}
\begin{split}
\int_{\{x\in\mathbb{R}^{N}:|x|>2R\}}|(-\Delta)^{\frac{\alpha}{2}}u_{n}|^{2}\,dx
\leq\int_{\mathbb{R}^{N}}|(-\Delta)^{\frac{\alpha}{2}}u_{n}|^{2}\phi_{R}\,dx
\leq\int_{\{x\in\mathbb{R}^{N}:|x|>R\}}|(-\Delta)^{\frac{\alpha}{2}}u_{n}|^{2}\,dx,
\end{split}
\end{displaymath}
thus
$\mu_{\infty}=\lim_{R\rightarrow\infty}\limsup_{n\rightarrow\infty}\int_{\mathbb{R}^{N}}|(-\Delta)^{\frac{\alpha}{2}}u_{n}|^{2}\phi_{R}\,dx.$
Note that
\begin{displaymath}
\begin{split}
\int_{\mathbb{R}^{N}}|(-\Delta)^{\frac{\alpha}{2}}u_{n}|^{2}\,dx
=\int_{\mathbb{R}^{N}}|(-\Delta)^{\frac{\alpha}{2}}u_{n}|^{2}\phi_{R}\,dx+\int_{\mathbb{R}^{N}}|(-\Delta)^{\frac{\alpha}{2}}u_{n}|^{2}(1-\phi_{R})\,dx.
\end{split}
\end{displaymath}
It is easy to verify that  $$\int_{\mathbb{R}^{N}}|(-\Delta)^{\frac{\alpha}{2}}u_{n}|^{2}(1-\phi_{R})\,dx\rightarrow\int_{\mathbb{R}^{N}}(1-\phi_{R})\,d\mu,$$
as $n\rightarrow\infty$. Hence we have
$$\mu(\mathbb{R}^{N})=\lim_{R\rightarrow\infty}\lim_{n\rightarrow\infty}\int_{\mathbb{R}^{N}}|(-\Delta)^{\frac{\alpha}{2}}u_{n}|^{2}(1-\phi_{R})\,dx.$$
Then
\begin{displaymath}
\begin{split}
\limsup_{n\rightarrow\infty}\int_{\mathbb{R}^{N}}|(-\Delta)^{\frac{\alpha}{2}}u_{n}|^{2}\,dx
=&\lim_{R\rightarrow\infty}\left(\limsup_{n\rightarrow\infty}\int_{\mathbb{R}^{N}}|(-\Delta)^{\frac{\alpha}{2}}u_{n}|^{2}\phi_{R}\,dx
+\int_{\mathbb{R}^{N}}(1-\phi_{R})\,d\mu\right)\\
=&\mu_{\infty}+\mu(\mathbb{R}^{N}).
\end{split}
\end{displaymath}
Similarly, we obtain that
$\limsup_{n\rightarrow\infty}\int_{\mathbb{R}^{N}}|u_{n}|^{2_{\alpha}^{*}}\,dx=\nu(\mathbb{R}^{N})+\nu_{\infty}.$
The lemma is thus proved. \end{proof}

In the sequel,  we derive some results involving  $\nu_{i}$ for any $i\in J$ and $\nu_{\infty}$.

\begin{lemma}\label{Le3.4}
 Let $\{u_{n}\}\subset
\dot{H}^{\alpha}(\mathbb{R}^{N})$  such that $u_{n}\rightarrow u$ weakly in $\dot{H}^{\alpha}(\mathbb{R}^{N})$, $|(-\bigtriangleup)^{\alpha/2}u_{n}|^{2}\rightarrow\mu$ and $|u_{n}|^{2_{\alpha}^{*}}\rightarrow\nu$ weakly-$*$ in
$\mathcal{M}(\mathbb{R}^{N})$, as $n\rightarrow\infty$. Then,
 $\nu_{i}\leq(S_{\alpha}^{-1}\mu(\{x_{i}\}))^{2_{\alpha}^{*}/2}$ for any $i\in J$ and $\nu_{\infty}\leq(S_{\alpha}^{-1}\mu_{\infty})^{2_{\alpha}^{*}/2}$, where $x_{i}$, $\nu_{i}$ are from \textup{Theorem 3.2} and $\mu_{\infty}$, $\nu_{\infty}$ are from \textup{Lemma 3.3}, $S_{\alpha}$ is the best Sobolev constant of the embedding $\dot{H}^{\alpha}(\mathbb{R}^{N})\hookrightarrow L^{2_{\alpha}^{*}}(\mathbb{R}^{N})$ \textup{(see \cite{Nezza})}, i.e.
\begin{eqnarray}\label{5}
S_{\alpha}=\inf_{u\in\dot{H}^{\alpha}(\mathbb{R}^{N})}
\frac{\int_{\mathbb{R}^{N}}|(-\Delta)^{\frac{\alpha}{2}}u|^{2}\,dx}{||u||_{L^{2_{\alpha}^{*}}(\mathbb{R}^{N})}^{2}}.
\end{eqnarray}
\end{lemma}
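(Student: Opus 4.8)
The plan is to adapt the classical Brezis--Lions concentration-compactness estimates to the fractional setting, combining Theorem~\ref{th3.2} (the fractional concentration-compactness principle) with a localized Sobolev inequality. First I would establish the estimate at the concentration points $x_i$. Fix $i\in J$ and take a smooth cutoff $\psi\in C_0^\infty(\mathbb{R}^N)$ with $0\le\psi\le1$, $\psi(x_i)=1$ and $\mathrm{supp}\,\psi\subset B(x_i,\varepsilon)$; more precisely work with $\psi_{\varepsilon}(x)=\psi((x-x_i)/\varepsilon)$. Applying the Sobolev inequality \eqref{5} to the functions $\psi_\varepsilon u_n\in\dot{H}^\alpha(\mathbb{R}^N)$ gives
$$
S_\alpha\Big(\int_{\mathbb{R}^N}|\psi_\varepsilon u_n|^{2_\alpha^*}\,dx\Big)^{2/2_\alpha^*}
\le \int_{\mathbb{R}^N}|(-\Delta)^{\alpha/2}(\psi_\varepsilon u_n)|^2\,dx.
$$
Passing to the limit $n\to\infty$, the left-hand side tends to $S_\alpha(\int \psi_\varepsilon^{2_\alpha^*}\,d\nu)^{2/2_\alpha^*}\ge S_\alpha\,\nu_i^{2/2_\alpha^*}$ plus lower-order contributions that vanish as $\varepsilon\to0$, using $\nu=|u|^{2_\alpha^*}+\sum_j\nu_j\delta_{x_j}$ and that $|u|^{2_\alpha^*}$ is non-atomic. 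For the right-hand side one must show
$$
\limsup_{n\to\infty}\int_{\mathbb{R}^N}|(-\Delta)^{\alpha/2}(\psi_\varepsilon u_n)|^2\,dx
= \mu(\{x_i\})+o(1)_{\varepsilon\to0}.
$$

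The delicate point — and the main obstacle — is precisely this last identity, because the fractional gradient is nonlocal, so the naive Leibniz-type splitting $(-\Delta)^{\alpha/2}(\psi u)=\psi(-\Delta)^{\alpha/2}u+u(-\Delta)^{\alpha/2}\psi$ fails. The standard remedy (going back to Palatucci--Pisante and used in \cite{Chang2013,ZZX}) is to control the commutator term
$$
\int_{\mathbb{R}^N}|(-\Delta)^{\alpha/2}(\psi_\varepsilon u_n)|^2\,dx
-\int_{\mathbb{R}^N}\psi_\varepsilon^2|(-\Delta)^{\alpha/2}u_n|^2\,dx
$$
via the Gagliardo-seminorm representation \eqref{30}: expanding the double integral, the cross terms are estimated by splitting $\mathbb{R}^{2N}$ into the region where $|x-y|$ is small (handled by the Lipschitz bound $|\psi_\varepsilon(x)-\psi_\varepsilon(y)|\le C\varepsilon^{-1}|x-y|$ together with boundedness of $\{u_n\}$ in $\dot{H}^\alpha$) and the region where $|x-y|$ is bounded below (handled by the $L^2$ bound). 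A Hölder argument then shows this commutator is $o(1)$ as $n\to\infty$ followed by $\varepsilon\to0$, which is exactly the content of ``Lemma 2.4 in \cite{Chang2013}'' cited just before the statement. Combining, one obtains $S_\alpha\nu_i^{2/2_\alpha^*}\le\mu(\{x_i\})$, i.e. $\nu_i\le(S_\alpha^{-1}\mu(\{x_i\}))^{2_\alpha^*/2}$.

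For the estimate at infinity I would repeat the same computation with the cutoff $\phi_R$ from the proof of Lemma~\ref{Le3.3} (equal to $1$ for $|x|>2R$ and $0$ for $|x|<R$) in place of $\psi_\varepsilon$. Applying \eqref{5} to $\phi_R u_n$ and letting $n\to\infty$ and then $R\to\infty$, the left side produces $S_\alpha\nu_\infty^{2/2_\alpha^*}$ by the definition of $\nu_\infty$ in \eqref{14}, while the right side produces $\mu_\infty$ by the definition of $\mu_\infty$ in Lemma~\ref{Le3.3}; the commutator/cross terms are now controlled because $|\nabla\phi_R|\le C/R\to0$ and $\{u_n\}$ is bounded in $L^2$ and $\dot{H}^\alpha$. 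This yields $\nu_\infty\le(S_\alpha^{-1}\mu_\infty)^{2_\alpha^*/2}$ and completes the proof. Throughout, the only genuinely fractional difficulty is the nonlocal commutator estimate; the rest is the familiar measure-theoretic bookkeeping from Lions' second concentration-compactness lemma.
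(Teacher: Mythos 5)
Your proposal is correct and follows essentially the same route as the paper's own proof: apply the Sobolev inequality \eqref{5} to $\varphi_\varepsilon u_n$ (resp. $\chi_R u_n$), expand the Gagliardo seminorm of the product into a main term plus a commutator term, show the main term localizes to $\mu(\{x_i\})$ (resp. $\mu_\infty$), and kill the commutator $\iint u_n^2(x)|\varphi_\varepsilon(x)-\varphi_\varepsilon(y)|^2|x-y|^{-N-2\alpha}\,dxdy$ by splitting $\mathbb{R}^{2N}$ according to proximity of $x,y$ to $x_i$ and the size of $|x-y|$. One small misattribution worth correcting: the commutator estimate is \emph{not} the content of Lemma 2.4 in \cite{Chang2013} (that lemma is a compactness result for the Nemytskii map $u\mapsto f(u)u$ and is invoked by the paper only in Lemma 3.5). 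In this paper the commutator estimate is proved directly via the case analysis (i)--(iii) in the proof of Lemma 3.4, using the Lipschitz bound $|\varphi_\varepsilon(x)-\varphi_\varepsilon(y)|\le C\varepsilon^{-1}|x-y|$ and, for the far-field region $|x-x_i|>K\varepsilon$, the H\"{o}lder estimate against the $L^{2_\alpha^*}$ norm (not just $L^2$) together with the decay $|x-x_i|^{-(N+2\alpha)}$; the compact embedding into $L^2_{loc}$ is then used to pass $n\to\infty$, and finally $\varepsilon\to0$ and $K\to\infty$. These are exactly the details your sketch compresses, and they are consistent with your outline.
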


\begin{proof} (1) Take $\varphi\in C_{0}^{\infty}(\mathbb{R}^{N})$
such that $0\leq\varphi\leq1$;
$\varphi\equiv1$ in $B(0,1)$, $\varphi\equiv0$ in $\mathbb{R}^{N}\setminus B(0,2)$. For any $\varepsilon>0$, define $\varphi_{\varepsilon}(x)=\varphi(\frac{x-x_{i}}{\varepsilon})$, where $i\in J$. It follows from  (\ref{30}) and (\ref{5}) that
$$\int_{\mathbb{R}^{N}}|u_{n}\varphi_{\varepsilon}|^{2_{\alpha}^{*}}\,dx\leq \left(S_{\alpha}^{-1}\iint_{\mathbb{R}^{2N}}
\frac{|u_{n}(x)\varphi_{\varepsilon}(x)-u_{n}(y)\varphi_{\varepsilon}(y)|^{2}}{|x-y|^{N+2\alpha}}\,dxdy\right)^{2_{\alpha}^{*}/2}.$$
We have
$$\int_{\mathbb{R}^{N}}|u_{n}\varphi_{\varepsilon}|^{2_{\alpha}^{*}}\,dx\rightarrow
\int_{\mathbb{R}^{N}}\varphi_{\varepsilon}^{2_{\alpha}^{*}}\,d\nu,\ \mbox{as}\ n\rightarrow\infty,$$
 $$\int_{\mathbb{R}^{N}}\varphi_{\varepsilon}^{2_{\alpha}^{*}}\,d\nu\rightarrow\nu(\{x_{i}\})=\nu_{i},\ \mbox{as}\ \varepsilon\rightarrow0.$$
Note that
\begin{displaymath}
\begin{split}
&\iint_{\mathbb{R}^{2N}}\frac{|u_{n}(x)\varphi_{\varepsilon}(x)-u_{n}(y)\varphi_{\varepsilon}(y)|^{2}}{|x-y|^{N+2\alpha}}\,dxdy\\
=&\iint_{\mathbb{R}^{2N}}\frac{|u_{n}(x)\varphi_{\varepsilon}(x)-u_{n}(x)\varphi_{\varepsilon}(y)
+u_{n}(x)\varphi_{\varepsilon}(y)-u_{n}(y)\varphi_{\varepsilon}(y)|^{2}}{|x-y|^{N+2\alpha}}\,dxdy\\
=&\iint_{\mathbb{R}^{2N}}\frac{u_{n}^{2}(x)(\varphi_{\varepsilon}(x)-\varphi_{\varepsilon}(y))^{2}}{|x-y|^{N+2\alpha}}\,dxdy
+\iint_{\mathbb{R}^{2N}}\frac{\varphi_{\varepsilon}^{2}(y)(u_{n}(x)-u_{n}(y))^{2}}{|x-y|^{N+2\alpha}}\,dxdy\\
&+\iint_{\mathbb{R}^{2N}}\frac{2u_{n}(x)\varphi_{\varepsilon}(y)(u_{n}(x)-u_{n}(y))(\varphi_{\varepsilon}(x)-\varphi_{\varepsilon}(y))}{|x-y|^{N+2\alpha}}\,dxdy,
\end{split}
\end{displaymath}
we get
$$\iint_{\mathbb{R}^{2N}}\frac{\varphi_{\varepsilon}^{2}(y)(u_{n}(x)-u_{n}(y))^{2}}{|x-y|^{N+2\alpha}}\,dxdy
\rightarrow\int_{\mathbb{R}^{N}}\varphi_{\varepsilon}^{2}\,d\mu,\ \mbox{as}\  n\rightarrow\infty,$$
$$\int_{\mathbb{R}^{N}}\varphi_{\varepsilon}^{2}\,d\mu\rightarrow\mu(\{x_{i}\}),\ \mbox{as}\ \varepsilon\rightarrow0.$$
Since $\{u_n\}$ is bounded in $\dot{H}^{\alpha}(\mathbb{R}^{N})$, by the H\"{o}lder inequality we obtain
\begin{displaymath}
\begin{split}
&\left|\iint_{\mathbb{R}^{2N}}\frac{u_{n}(x)\varphi_{\varepsilon}(y)(u_{n}(x)-u_{n}(y))(\varphi_{\varepsilon}(x)-\varphi_{\varepsilon}(y))}{|x-y|^{N+2\alpha}}\,dxdy\right|\\
\leq&\left(\iint_{\mathbb{R}^{2N}}\frac{\varphi_{\varepsilon}^{2}(y)(u_{n}(x)-u_{n}(y))^{2}}{|x-y|^{N+2\alpha}}\,dxdy\right)^{\frac{1}{2}}\left(\iint_{\mathbb{R}^{2N}}\frac{u_{n}^{2}(x)(\varphi_{\varepsilon}(x)-\varphi_{\varepsilon}(y))^{2}}
{|x-y|^{N+2\alpha}}\,dxdy\right)^{1/2}\\
\leq&C\left(\iint_{\mathbb{R}^{2N}}\frac{u_{n}^{2}(x)(\varphi_{\varepsilon}(x)-\varphi_{\varepsilon}(y))^{2}}
{|x-y|^{N+2\alpha}}\,dxdy\right)^{1/2}.
\end{split}
\end{displaymath}

In the following, we claim that $$\lim_{\varepsilon\rightarrow0}\lim_{n\rightarrow\infty}\iint_{\mathbb{R}^{2N}}\frac{u_{n}^{2}(x)(\varphi_{\varepsilon}(x)-\varphi_{\varepsilon}(y))^{2}}
{|x-y|^{N+2\alpha}}\,dxdy=0.$$
Note that
\begin{displaymath}
\begin{split}
\mathbb{R}^{N}\times\mathbb{R}^{N}
=&((\mathbb{R}^{N}\setminus B(x_{i},2\varepsilon))\cup B(x_{i},2\varepsilon))
\times((\mathbb{R}^{N}\setminus B(x_{i},2\varepsilon))\cup B(x_{i},2\varepsilon))\\
=&((\mathbb{R}^{N}\setminus B(x_{i},2\varepsilon))\times(\mathbb{R}^{N}\setminus B(x_{i},2\varepsilon)))\cup(B(x_{i},2\varepsilon)\times\mathbb{R}^{N})\\
&\cup((\mathbb{R}^{N}\setminus B(x_{i},2\varepsilon))\times B(x_{i},2\varepsilon)).
\end{split}
\end{displaymath}

(i) If $(x,y)\in(\mathbb{R}^{N}\setminus B(x_{i},2\varepsilon))\times(\mathbb{R}^{N}\setminus B(x_{i},2\varepsilon))$, then $\varphi_{\varepsilon}(x)=\varphi_{\varepsilon}(y)=0$.

(ii) $(x,y)\in B(x_{i},2\varepsilon)\times\mathbb{R}^{N}$. If $|x-y|\leq\varepsilon$,
$|y-x_{i}|\leq|x-y|+|x-x_{i}|\leq3\varepsilon,$ which implies
\begin{displaymath}
\begin{split}
&\int_{B(x_{i},2\varepsilon)}dx\int_{\{y\in\mathbb{R}^{N}:|x-y|\leq\varepsilon\}}\frac{u_{n}^{2}(x)(\varphi_{\varepsilon}(x)-\varphi_{\varepsilon}(y))^{2}}{|x-y|^{N+2\alpha}}dy\\
=&\int_{B(x_{i},2\varepsilon)}dx\int_{\{y\in\mathbb{R}^{N}:|x-y|\leq\varepsilon\}}\frac{u_{n}^{2}(x)|\nabla\varphi(\xi)|^{2}|\frac{x-y}{\varepsilon}|^{2}}{|x-y|^{N+2\alpha}}dy\\
\leq& C\varepsilon^{-2}
\int_{B(x_{i},2\varepsilon)}dx\int_{\{y\in\mathbb{R}^{N}:|x-y|\leq\varepsilon\}}\frac{u_{n}^{2}(x)}{|x-y|^{N+2\alpha-2}}dy\\
=&C\varepsilon^{-2\alpha}
\int_{B(x_{i},2\varepsilon)}u_{n}^{2}(x)dx,
\end{split}
\end{displaymath}
where $\xi=(y-x_{i})/\varepsilon+\tau(x-x_{i})/\varepsilon$ and $\tau\in(0,1)$.

If $|x-y|>\varepsilon$, then we have
\begin{displaymath}
\begin{split}
&\int_{B(x_{i},2\varepsilon)}dx\int_{\{y\in\mathbb{R}^{N}:|x-y|>\varepsilon\}}\frac{u_{n}^{2}(x)(\varphi_{\varepsilon}(x)-\varphi_{\varepsilon}(y))^{2}}{|x-y|^{N+2\alpha}}dy\\
\leq& C
\int_{B(x_{i},2\varepsilon)}dx\int_{\{y\in\mathbb{R}^{N}:|x-y|>\varepsilon\}}\frac{u_{n}^{2}(x)}{|x-y|^{N+2\alpha}}dy\\
=&C\varepsilon^{-2\alpha}
\int_{B(x_{i},2\varepsilon)}u_{n}^{2}(x)dx.
\end{split}
\end{displaymath}

(iii) $(x,y)\in(\mathbb{R}^{N}\setminus B(x_{i},2\varepsilon))\times B(x_{i},2\varepsilon)$.
If $|x-y|\leq\varepsilon$, $|x-x_{i}|\leq|x-y|+|y-x_{i}|\leq3\varepsilon.$ Then
\begin{displaymath}
\begin{split}
&\int_{\mathbb{R}^{N}\setminus B(x_{i},2\varepsilon)}dx\int_{\{y\in B(x_{i},2\varepsilon):|x-y|\leq\varepsilon\}}\frac{u_{n}^{2}(x)(\varphi_{\varepsilon}(x)-\varphi_{\varepsilon}(y))^{2}}{|x-y|^{N+2\alpha}}dy\\
\leq& C\varepsilon^{-2}
\int_{B(x_{i},3\varepsilon)}dx\int_{\{y\in B(x_{i},2\varepsilon):|x-y|\leq\varepsilon\}}\frac{u_{n}^{2}(x)}{|x-y|^{N+2\alpha-2}}dy\\
\leq&C\varepsilon^{-2}
\int_{B(x_{i},3\varepsilon)}dx\int_{\{z\in\mathbb{R}^{N}:|z|\leq\varepsilon\}}\frac{u_{n}^{2}(x)}{|z|^{N+2\alpha-2}}dz\\
=&C\varepsilon^{-2\alpha}
\int_{B(x_{i},3\varepsilon)}u_{n}^{2}(x)dx.
\end{split}
\end{displaymath}

Notice that there exists $K>4$ such that
$(\mathbb{R}^{N}\setminus B(x_{i},2\varepsilon))\times B(x_{i},2\varepsilon)
\subset(B(x_{i},K\varepsilon)\times B(x_{i},2\varepsilon))\cup((\mathbb{R}^{N}\setminus B(x_{i},K\varepsilon))\times B(x_{i},2\varepsilon)).$

If $|x-y|>\varepsilon$, we obtain
\begin{displaymath}
\begin{split}
&\int_{ B(x_{i},K\varepsilon)}dx\int_{\{y\in B(x_{i},2\varepsilon):|x-y|>\varepsilon\}}\frac{u_{n}^{2}(x)(\varphi_{\varepsilon}(x)-\varphi_{\varepsilon}(y))^{2}}{|x-y|^{N+2\alpha}}dy\\
\leq& C
\int_{B(x_{i},K\varepsilon)}dx\int_{\{y\in B(x_{i},2\varepsilon):|x-y|>\varepsilon\}}\frac{u_{n}^{2}(x)}{|x-y|^{N+2\alpha}}dy\\
\leq&C
\int_{B(x_{i},K\varepsilon)}dx\int_{\{z\in\mathbb{R}^{N}:|z|>\varepsilon\}}\frac{u_{n}^{2}(x)}{|z|^{N+2\alpha}}dz\\
\leq&C\varepsilon^{-2\alpha}
\int_{B(x_{i},K\varepsilon)}u_{n}^{2}(x)dx.
\end{split}
\end{displaymath}

If $(x,y)\in(\mathbb{R}^{N}\setminus B(x_{i},K\varepsilon))\times B(x_{i},2\varepsilon)$, we get
\begin{displaymath}
\begin{split}
|x-y|&\geq|x-x_{i}|-|y-x_{i}|=\frac{|x-x_{i}|}{2}+\frac{|x-x_{i}|}{2}-|y-x_{i}|\\
&\geq\frac{|x-x_{i}|}{2}+\frac{K}{2}\varepsilon-2\varepsilon>\frac{|x-x_{i}|}{2},
\end{split}
\end{displaymath}
which implies
\begin{displaymath}
\begin{split}
&\int_{\mathbb{R}^{N}\setminus B(x_{i},K\varepsilon)}dx\int_{\{y\in B(x_{i},2\varepsilon):|x-y|>\varepsilon\}}\frac{u_{n}^{2}(x)(\varphi_{\varepsilon}(x)-\varphi_{\varepsilon}(y))^{2}}{|x-y|^{N+2\alpha}}dy\\
\leq&C
\int_{\mathbb{R}^{N}\setminus B(x_{i},K\varepsilon)}dx\int_{\{y\in B(x_{i},2\varepsilon):|x-y|>\varepsilon\}}\frac{u_{n}^{2}(x)}{|x-x_{i}|^{N+2\alpha}}dy\\
\leq&C\varepsilon^{N}
\int_{\mathbb{R}^{N}\setminus B(x_{i},K\varepsilon)}\frac{u_{n}^{2}(x)}{|x-x_{i}|^{N+2\alpha}}dx\\
\leq&C\varepsilon^{N}
\left(\int_{\mathbb{R}^{N}\setminus B(x_{i},K\varepsilon)}|u_{n}(x)|^{2_{\alpha}^{*}}\,dx\right)^{2/2_{\alpha}^{*}}
\left(\int_{\mathbb{R}^{N}\setminus B(x_{i},K\varepsilon)}|x-x_{i}|^{-(N+2\alpha)\frac{2_{\alpha}^{*}}{2_{\alpha}^{*}-2}}\,dx\right)^{(2_{\alpha}^{*}-2)/2_{\alpha}^{*}}\\
=&CK^{-N}\left(\int_{\mathbb{R}^{N}\setminus B(x_{i},K\varepsilon)}|u_{n}(x)|^{2_{\alpha}^{*}}\,dx\right)^{2/2_{\alpha}^{*}}.
\end{split}
\end{displaymath}
In views of (i), (ii) and (iii), we have
\begin{eqnarray}\label{4}
\begin{split}
&\iint_{\mathbb{R}^{2N}}\frac{u_{n}^{2}(x)(\varphi_{\varepsilon}(x)-\varphi_{\varepsilon}(y))^{2}}{|x-y|^{N+2\alpha}}\,dxdy\\
=&\iint_{B(x_{i},2\varepsilon)\times\mathbb{R}^{N}}\frac{u_{n}^{2}(x)(\varphi_{\varepsilon}(x)-\varphi_{\varepsilon}(y))^{2}}{|x-y|^{N+2\alpha}}\,dxdy +\iint_{(\mathbb{R}^{N}\setminus B(x_{i},2\varepsilon))\times B(x_{i},2\varepsilon)} \frac{u_{n}^{2}(x)(\varphi_{\varepsilon}(x)-\varphi_{\varepsilon}(y))^{2}}{|x-y|^{N+2\alpha}}\,dxdy\\
\leq&C\varepsilon^{-2\alpha}\int_{B(x_{i},K\varepsilon)}u_{n}^{2}(x)\,dx+CK^{-N}\left(\int_{\mathbb{R}^{N}\setminus B(x_{i},K\varepsilon)}|u_{n}(x)|^{2_{\alpha}^{*}}\,dx\right)^{2/2_{\alpha}^{*}}\\
\leq&C\varepsilon^{-2\alpha}\int_{B(x_{i},K\varepsilon)}u_{n}^{2}(x)\,dx+CK^{-N}.
\end{split}
\end{eqnarray}
Note that $u_{n}\rightarrow u$ weakly in $H^{\alpha}(\mathbb{R}^{N})$,
by Theorem 2.1 we obtain
  $u_{n}\rightarrow u$ in $L^{2}_{loc}(\mathbb{R}^{N})$, which implies
$$C\varepsilon^{-2\alpha}\int_{B(x_{i},K\varepsilon)}u_{n}^{2}(x)\,dx
+CK^{-N}\rightarrow C\varepsilon^{-2\alpha}\int_{B(x_{i},K\varepsilon)}u^{2}(x)\,dx+CK^{-N},$$
 as $n\rightarrow\infty$. Then,
\begin{displaymath}
\begin{split}
&C\varepsilon^{-2\alpha}\int_{B(x_{i},K\varepsilon)}u^{2}(x)\,dx+CK^{-N}\\
\leq& C\varepsilon^{-2\alpha}\left(\int_{B(x_{i},K\varepsilon)}|u(x)|^{2_{\alpha}^{*}}\,dx\right)^{2/2_{\alpha}^{*}}
\left(\int_{B(x_{i},K\varepsilon)}\,dx\right)^{1-2/2_{\alpha}^{*}}+CK^{-N}\\
=& CK^{2\alpha}\left(\int_{B(x_{i},K\varepsilon)}|u(x)|^{2_{\alpha}^{*}}\,dx\right)^{2/2_{\alpha}^{*}}+CK^{-N}\rightarrow CK^{-N}
\end{split}
\end{displaymath}
as $\varepsilon\rightarrow0$.
Furthermore, we have
\begin{eqnarray}\label{7}
\begin{split}
&\limsup_{\varepsilon\rightarrow0}\limsup_{n\rightarrow\infty}
\iint_{\mathbb{R}^{2N}}\frac{u_{n}^{2}(x)(\varphi_{\varepsilon}(x)-\varphi_{\varepsilon}(y))^{2}}{|x-y|^{N+2\alpha}}\,dxdy\\
=&\lim_{K\rightarrow\infty}\limsup_{\varepsilon\rightarrow0}\limsup_{n\rightarrow\infty}
\iint_{\mathbb{R}^{2N}}\frac{u_{n}^{2}(x)(\varphi_{\varepsilon}(x)-\varphi_{\varepsilon}(y))^{2}}{|x-y|^{N+2\alpha}}\,dxdy=0.
\end{split}
\end{eqnarray}
Thus, for any $i\in J$,
we obtain $$\nu_{i}\leq(S_{\alpha}^{-1}\mu(\{x_{i}\}))^{2_{\alpha}^{*}/2}.$$

(2) It follows from from  (\ref{30}) and (\ref{5}) that
$$\int_{\mathbb{R}^{N}}|u_{n}\chi_{R}|^{2_{\alpha}^{*}}\,dx\leq
\left(S_{\alpha}^{-1}\iint_{\mathbb{R}^{2N}}\frac{|u_{n}(x)\chi_{R}(x)-u_{n}(y)\chi_{R}(y)|^{2}}{|x-y|^{N+2\alpha}}\,dxdy\right)^{2_{\alpha}^{*}/2},$$
where $\chi_{R}$ is from Lemma 3.3. We have
$$\limsup_{R\rightarrow\infty}\limsup_{n\rightarrow\infty}\int_{\mathbb{R}^{N}}|u_{n}\chi_{R}|^{2_{\alpha}^{*}}\,dx=\nu_{\infty}.$$
Note that
\begin{displaymath}
\begin{split}
&\iint_{\mathbb{R}^{2N}}\frac{|u_{n}(x)\chi_{R}(x)-u_{n}(y)\chi_{R}(y)|^{2}}{|x-y|^{N+2\alpha}}\,dxdy\\
=&\iint_{\mathbb{R}^{2N}}\frac{u_{n}^{2}(x)(\chi_{R}(x)-\chi_{R}(y))^{2}}{|x-y|^{N+2\alpha}}\,dxdy
+\iint_{\mathbb{R}^{2N}}\frac{\chi_{R}^{2}(y)(u_{n}(x)-u_{n}(y))^{2}}{|x-y|^{N+2\alpha}}\,dxdy\\
&+\iint_{\mathbb{R}^{2N}}\frac{2u_{n}(x)\chi_{R}(y)(u_{n}(x)-u_{n}(y))(\chi_{R}(x)-\chi_{R}(y))}{|x-y|^{N+2\alpha}}\,dxdy.
\end{split}
\end{displaymath}
We obtain
$$\limsup_{R\rightarrow\infty}\limsup_{n\rightarrow\infty}
\int\int_{\mathbb{R}^{2N}}\frac{\chi_{R}^{2}(y)(u_{n}(x)-u_{n}(y))^{2}}{|x-y|^{N+2\alpha}}\,dxdy=\mu_{\infty}$$
and it follows from the H\"{o}lder inequality that
\begin{displaymath}
\begin{split}
&\left|\iint_{\mathbb{R}^{2N}}\frac{u_{n}(x)\chi_{R}(y)(u_{n}(x)-u_{n}(y))(\chi_{R}(x)-\chi_{R}(y))}{|x-y|^{N+2\alpha}}\,dxdy\right|\\
\leq&C\left(\iint_{\mathbb{R}^{2N}}\frac{u_{n}^{2}(x)(\chi_{R}(x)-\chi_{R}(y))^{2}}
{|x-y|^{N+2\alpha}}\,dxdy\right)^{1/2}.
\end{split}
\end{displaymath}

Note that
\begin{displaymath}
\begin{split}
&\limsup_{R\rightarrow\infty}\limsup_{n\rightarrow\infty}\iint_{\mathbb{R}^{2N}}\frac{u_{n}^{2}(x)(\chi_{R}(x)-\chi_{R}(y))^{2}}
{|x-y|^{N+2\alpha}}\,dxdy\\
=&\limsup_{R\rightarrow\infty}\limsup_{n\rightarrow\infty}\iint_{\mathbb{R}^{2N}}\frac{u_{n}^{2}(x)((1-\chi_{R}(x))-(1-\chi_{R}(y)))^{2}}
{|x-y|^{N+2\alpha}}\,dxdy,
\end{split}
\end{displaymath}
then, similar to the proof of (\ref{7}), we obtain
\begin{displaymath}
\begin{split}
\limsup_{R\rightarrow\infty}\limsup_{n\rightarrow\infty}\iint_{\mathbb{R}^{2N}}\frac{u_{n}^{2}(x)((1-\chi_{R}(x))-(1-\chi_{R}(y)))^{2}}
{|x-y|^{N+2\alpha}}\,dxdy=0.
\end{split}
\end{displaymath}
Then, $$\nu_{\infty}\leq(S_{\alpha}^{-1}\mu_{\infty})^{2_{\alpha}^{*}/2}.$$
Therefore, we have completed the proof. \end{proof}

\begin{lemma}\label{Le3.5}
 There exists $\lambda_*>0$ such that for any $\lambda\in(0,\lambda_*)$ and $\eta\in[\overline{\eta},1]$, each bounded $\mathrm{(PS)}$ sequence for  functional $I_{\eta}$ contains a convergent subsequence.
\end{lemma}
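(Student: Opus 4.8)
The plan is to run the standard Brézis--Nirenberg/Lions concentration-compactness argument adapted to the radial fractional setting. Let $\{u_n\}\subset H^\alpha_r(\mathbb R^N)$ be a bounded (PS) sequence for $I_\eta$ at some level. By reflexivity of $H^\alpha_r(\mathbb R^N)$ we may pass to a subsequence so that $u_n\rightharpoonup u$ weakly in $H^\alpha_r(\mathbb R^N)$, $u_n\to u$ strongly in $L^p_{loc}(\mathbb R^N)$ for all $p\in[2,2^*_\alpha)$ by Theorem~\ref{th2.1}, and $u_n\to u$ a.e.\ in $\mathbb R^N$. Up to a further subsequence, $|(-\Delta)^{\alpha/2}u_n|^2\rightharpoonup\mu$ and $|u_n|^{2^*_\alpha}\rightharpoonup\nu$ weakly-$*$ in $\mathcal M(\mathbb R^N)$. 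By Theorem~\ref{th3.2} applied with $\Omega=\mathbb R^N$ we obtain a countable set $\{x_j\}_{j\in J}$ and weights $\nu_j>0$ with $\nu=|u|^{2^*_\alpha}+\sum_{j\in J}\nu_j\delta_{x_j}$, while Lemma~\ref{Le3.3} controls the mass at infinity via $\nu_\infty$, and Lemma~\ref{Le3.4} gives $\nu_j\le (S_\alpha^{-1}\mu(\{x_j\}))^{2^*_\alpha/2}$ and $\nu_\infty\le (S_\alpha^{-1}\mu_\infty)^{2^*_\alpha/2}$.

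The first key step is to show that, for $\lambda$ small, all the concentration masses vanish: $\nu_j=0$ for every $j\in J$ and $\nu_\infty=0$. For a fixed $j$, test the (PS) condition $\langle I_\eta'(u_n),\varphi_\varepsilon u_n\rangle\to 0$ with the cut-off $\varphi_\varepsilon$ centered at $x_j$ as in the proof of Lemma~\ref{Le3.4}. The gradient term produces $\int\varphi_\varepsilon\,d\mu$ plus a cross term that vanishes as $\varepsilon\to 0$ (exactly the estimate already carried out in Lemma~\ref{Le3.4}); the potential term $\int V(x)\varphi_\varepsilon u_n^2\,dx\to 0$ since $u_n\to u$ in $L^2_{loc}$ and $\varphi_\varepsilon$ shrinks; the subcritical term $\int k(x)f(u_n)\varphi_\varepsilon u_n\,dx\to 0$ using (H2), the growth bound \eqref{1}, boundedness in $L^{2^*_\alpha}$, and $L^2_{loc}$ convergence; and the critical term gives $\eta\lambda\int\varphi_\varepsilon\,d\nu\to\eta\lambda\,\nu_j$. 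Letting $\varepsilon\to0$ yields $\mu(\{x_j\})\le\eta\lambda\,\nu_j$. Combining with Lemma~\ref{Le3.4}, $\nu_j\le(S_\alpha^{-1}\eta\lambda\,\nu_j)^{2^*_\alpha/2}$, so either $\nu_j=0$ or $\nu_j\ge (S_\alpha/(\eta\lambda))^{N/(2\alpha)}$; choosing $\lambda_*$ small enough (using $\eta\le1$ and an a priori bound on $\nu(\mathbb R^N)$ coming from the uniform bound $c_\eta\le c_0$ of Lemma~\ref{Le3.2}, which bounds $\|u_n\|$ and hence $\int|u_n|^{2^*_\alpha}$) forces $\nu_j=0$. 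The same argument with the cut-off $\chi_R$ of Lemma~\ref{Le3.3} and $R\to\infty$ rules out $\nu_\infty$.

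The second step is to upgrade this to strong convergence. Once $\nu_j=0$ for all $j$ and $\nu_\infty=0$, Lemma~\ref{Le3.3} gives $\int_{\mathbb R^N}|u_n|^{2^*_\alpha}\,dx\to\int_{\mathbb R^N}|u|^{2^*_\alpha}\,dx$, which together with $u_n\to u$ a.e.\ and the Brézis--Lieb lemma yields $u_n\to u$ strongly in $L^{2^*_\alpha}(\mathbb R^N)$. Next, by the growth condition \eqref{1}, interpolation between the strong $L^{2^*_\alpha}$ convergence and boundedness in $L^2$ (or directly a generalized dominated convergence / Brézis--Lieb type argument, which is where "Lemma 2.4 in \cite{Chang2013}" enters) one gets $\int_{\mathbb R^N}k(x)f(u_n)(u_n-u)\,dx\to0$ and $\int_{\mathbb R^N}|u_n|^{2^*_\alpha-2}u_n(u_n-u)\,dx\to0$. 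Now feed $u_n-u$ as a test function into $\langle I_\eta'(u_n),u_n-u\rangle\to0$: the nonlinear terms vanish by the above, and since $u_n\rightharpoonup u$ the mixed bilinear contributions in the quadratic part also vanish, leaving $\|u_n-u\|_{H^\alpha}^2=\int|(-\Delta)^{\alpha/2}(u_n-u)|^2+\int V(x)(u_n-u)^2\to0$ (using $V_1\le V$). Hence $u_n\to u$ strongly in $H^\alpha_r(\mathbb R^N)$.

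The main obstacle is making the "small $\lambda$" threshold genuinely uniform in $\eta\in[\overline\eta,1]$ and independent of the particular (PS) sequence: one needs the a priori bound on $\int|u_n|^{2^*_\alpha}$ to be uniform, which is why the uniform bound $c_\eta\le c_0$ from Lemma~\ref{Le3.2} and the resulting uniform bound on $\|u_n\|_{H^\alpha}$ are essential, and why $\lambda_*$ can then be chosen once and for all. A secondary technical point is the careful handling of the cross terms in the localized quadratic form for the nonlocal operator $(-\Delta)^{\alpha/2}$ — but this is precisely the computation already performed in the proof of Lemma~\ref{Le3.4}, so it can be invoked rather than redone.
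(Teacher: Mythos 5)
Your overall structure matches the paper's: concentration-compactness via Theorem~\ref{th3.2}, Lemmas~\ref{Le3.3}--\ref{Le3.4}, local testing with $\varphi_\varepsilon u_n$ and $\chi_R u_n$ to get $\mu(\{x_j\})\leq\lambda\nu_j$ and $\mu_\infty\leq\lambda\nu_\infty$, and then a Br\'ezis--Lieb upgrade to strong convergence. However, there is a genuine gap in the step where you conclude that the concentration masses vanish, and it is precisely the step where the hypothesis (H3) must enter.

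You claim that "the uniform bound $c_\eta\leq c_0$ of Lemma~\ref{Le3.2} \dots bounds $\|u_n\|$ and hence $\int|u_n|^{2^*_\alpha}$." This is false: a bound on the (PS) \emph{level} does not bound the $H^\alpha$ norm of the (PS) sequence. That inference would require something like the (AR) condition \eqref{L3}, which the paper explicitly avoids; indeed the whole reason for employing Jeanjean's monotonicity trick (Theorem~\ref{th3.1}) and the family $I_\eta$ is that the level bound alone gives no norm bound. The norm bound $M$ you have comes from the hypothesis "bounded (PS) sequence," but that $M$ is sequence- and $\eta$-dependent, so the threshold $\lambda_*$ you would obtain from it is not uniform over $\eta\in[\overline\eta,1]$, which is exactly what is needed in the proof of Theorem~\ref{Th1}.

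The paper's argument at this point is the crucial missing idea. Writing $C_3$ for the bound on the level $|I_\eta(u_n)|$, one uses hypothesis (H3) (the weak Ambrosetti--Rabinowitz-type condition $0<2F(t)\leq f(t)t$) to compute
\begin{equation*}
2I_{\eta}(u_{n})-\langle I_{\eta}'(u_{n}),u_{n}\rangle
=\eta\int_{\mathbb{R}^{N}}k(x)(f(u_n)u_n-2F(u_n))\,dx
+\eta\lambda\bigl(1-\tfrac{2}{2_{\alpha}^{*}}\bigr)\int_{\mathbb{R}^{N}}|u_n|^{2_{\alpha}^{*}}\,dx
\geq\lambda\tfrac{\alpha}{N}\int_{\mathbb{R}^{N}}|u_n|^{2_{\alpha}^{*}}\varphi_{\varepsilon}\,dx,
\end{equation*}
so that $2C_3\geq\lambda\tfrac{\alpha}{N}\nu_{i_0}$ (and similarly for $\nu_\infty$). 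Combining this with the lower bound $\nu_{i_0}\geq(S_\alpha\lambda^{-1})^{2^*_\alpha/(2^*_\alpha-2)}$ coming from Step~1 and Lemma~\ref{Le3.4} yields $\lambda\geq(\tfrac{\alpha}{2NC_3})^{2\alpha/(N-2\alpha)}S_\alpha^{2^*_\alpha/2}=:\lambda_*$, a contradiction for $\lambda<\lambda_*$. The point is that $\lambda_*$ depends only on $C_3$; in the application one can take $C_3=c_0+1$ uniformly in $\eta$ thanks to Lemma~\ref{Le3.2}. Without invoking (H3) in this way, there is no mechanism that makes $\lambda_*$ independent of the $\eta$-dependent norm bound of the (PS) sequence, and your argument does not close. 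A secondary, minor point: your claim that $\int k(x)f(u_n)\varphi_\varepsilon u_n\,dx\to 0$ is not quite right as stated, since the critical part of the growth estimate \eqref{1} contributes a term proportional to $\nu_j$; the paper handles this by splitting $tf(t)\leq\tfrac{V_1}{2k_2}t^2+\tau|t|^{2^*_\alpha}+C_6|t|^p$ and letting $\tau\to0$ at the end, so the subcritical contribution is absorbed rather than simply vanishing.
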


 \begin{proof} Let
 $\{u_{n}\}\subset H^{\alpha}_r(\mathbb{R}^{N})$  be a  bounded $\mathrm{(PS)}$ sequence, i.e. there exists $C_3>0$ such that  $$|I_{\eta}(u_{n})|\leq C_3$$
  and
   $$I_{\eta}'(u_{n})\rightarrow0\ \mbox{in}\  H^{\alpha}_r(\mathbb{R}^{N}),\ \mbox{as}\ n\rightarrow\infty.$$
 Passing to a subsequence, still
denoted by $\{u_{n}\}$, we may assume that $u_{n}\rightarrow u_{0}$ weakly in $H^{\alpha}_{r}(\mathbb{R}^{N})$. By the compact embedding
 $$H^{\alpha}_r(\mathbb{R}^{N})\hookrightarrow L^p(\mathbb{R}^{N})$$
  for $p\in(2,2^*_\alpha)$, we assume that
  $$u_{n}\rightarrow u_{0}\ \mbox{in}\  L^p(\mathbb{R}^{N})\
   \mbox{and}\ u_{n}(x)\rightarrow u_{0}(x)\ \mbox{a.e.in}\  \mathbb{R}^{N},$$
    as $n\rightarrow\infty$. Moreover, by Phrokorov's Theorem  (see Theorem 8.6.2 in \cite{Bog})
 there exist $\mu, \,\nu\in
\mathcal{M}(\mathbb{R}^{N})$ such that
$$|(-\bigtriangleup)^{\frac{\alpha}{2}}u_{n}|^{2}\rightarrow\mu\ \mbox{and}\ |u_{n}|^{2_{\alpha}^{*}}\rightarrow\nu\ \mbox{weakly-}\ast\ \mbox{in}\  \mathcal{M}(\mathbb{R}^{N}),$$
 as $n\rightarrow\infty$. It follows from Theorem 3.2 that
$u_{n}\rightarrow u_{0}$ in $L_{loc}^{2_{\alpha}^{*}}(\mathbb{R}^{N})$ or  $\nu=|u_{0}|^{2_{\alpha}^{*}}+\sum_{j\in J}\nu_{j}\delta_{x_{j}}$,  as $n\rightarrow\infty$, where
$J$ is a countable set,
 $\{\nu_{j}\}\subset[0,\infty)$, $\{x_{j}\}\subset\mathbb{R}^{N}$.

 For any $\phi\in H^{\alpha}_r(\mathbb{R}^{N})$, we obtain
  \begin{eqnarray*}
\begin{split}
\langle I_{\eta}'(u_{n}), \phi\rangle-\langle I_{\eta}'(u_{0}), \phi\rangle
=&\int_{\mathbb{R}^{N}}(-\Delta)^{\frac{\alpha}{2}}(u_{n}-u_{0})(-\Delta)^{\frac{\alpha}{2}}\phi\,dx
+\int_{\mathbb{R}^{N}}V(x)(u_{n}-u_{0})\phi\,dx\\
&-\eta\int_{\mathbb{R}^{N}}k(x)(f(u_n)-f(u_0))\phi\,dx-\eta\lambda\int_{\mathbb{R}^{N}}(|u_n|^{2_{\alpha}^{*}-2}u_n-|u_{0}|^{2_{\alpha}^{*}-2}u_{0})\phi\,dx.
\end{split}
\end{eqnarray*}
 As $u_{n}\rightarrow u_{0}$ weakly in $H^{\alpha}_{r}(\mathbb{R}^{N})$, we have
$$\int_{\mathbb{R}^{N}}(-\Delta)^{\frac{\alpha}{2}}(u_{n}-u_{0})\cdot(-\Delta)^{\frac{\alpha}{2}}\phi\,dx+\int_{\mathbb{R}^{N}}V(x)(u_{n}-u_{0})\phi\,dx\rightarrow0.$$
Note that
\begin{align*}
\left\{|u_n|^{2_{\alpha}^{*}-2}u_n-|u_{0}|^{2_{\alpha}^{*}-2}u_{0}\right\}_n\ \ \mbox{is bounded in} \
L^{\frac{2_{\alpha}^{*}}{2_{\alpha}^{*}-1}}(\mathbb{R}^{N})
\end{align*}
and
\begin{align*}
|u_n(x)|^{2_{\alpha}^{*}-2}u_n(x)-|u_{0}(x)|^{2_{\alpha}^{*}-2}u_{0}(x)\rightarrow0  \mbox{ a.e.\ in}\  \mathbb{R}^{N},
\end{align*}
then
\begin{align*}
|u_n|^{2_{\alpha}^{*}-2}u_n-|u_{0}|^{2_{\alpha}^{*}-2}u_{0}\rightarrow0  \ \ \mbox{ weakly in}\ L^{\frac{2_{\alpha}^{*}}{2_{\alpha}^{*}-1}}(\mathbb{R}^{N})
\end{align*}
which implies
 $$\int_{\mathbb{R}^{N}}(|u_n|^{2_{\alpha}^{*}-2}u_n-|u_{0}|^{2_{\alpha}^{*}-2}u_{0})\phi\,dx\rightarrow0.$$

In the sequel, we will verify that $\int_{\mathbb{R}^{N}}k(x)(f(u_n)-f(u_0))\phi\,dx\rightarrow0$,  as $n\rightarrow\infty$.

 Let $\psi\in C^\infty_0(-2,2)$ such that $\psi\equiv1$ on $(-1,1)$ and define
$f_1(t)=\psi(t)f(t)$, $f_2(t)=(1-\psi(t))f(t)$. Hence we obtain
$$|f_1(t)|\leq C_4|t|\ \mbox{and}\ |f_2(t)|\leq C_5|t|^{2_{\alpha}^{*}-1}.$$
Since $\{k(x)f_1(u_n)\}$ is bounded in $L^{2}(\mathbb{R}^{N})$ and $k(x)f_1(u_n(x))\rightarrow k(x)f_1(u_{0}(x))$ a.e. in $\mathbb{R}^{N}$, we get that $k(x)f_1(u_n)\rightarrow k(x)f_1(u_{0})$ weakly in $L^{2}(\mathbb{R}^{N})$.
Thus $$\int_{\mathbb{R}^{N}}k(x)f_1(u_n)\phi\,dx\rightarrow\int_{\mathbb{R}^{N}}k(x)f_1(u_{0})\phi\,dx.$$
 Similarly, $$\int_{\mathbb{R}^{N}}k(x)f_2(u_n)\phi\,dx\rightarrow\int_{\mathbb{R}^{N}}k(x)f_2(u_{0})\phi\,dx.$$
Note that $f(t)=f_1(t)+f_2(t)$, we deduce
$$\int_{\mathbb{R}^{N}}k(x)f(u_n)\phi\,dx\rightarrow\int_{\mathbb{R}^{N}}k(x)f(u_{0})\phi\,dx.$$

As $\langle I_{\eta}'(u_{n}), \phi\rangle\rightarrow0$, it follows that $\langle I_{\eta}'(u_{0}), \phi\rangle=0$, i.e. $I_{\eta}'(u_{0})=0$. Thus,
\begin{equation}\label{9}
\int_{\mathbb{R}^{N}}|(-\Delta)^{\frac{\alpha}{2}}u_{0}|^2\,dx
+\int_{\mathbb{R}^{N}}V(x)u_0^2\,dx
=\eta\int_{\mathbb{R}^{N}}k(x)f(u_0)u_0\,dx
+\eta\lambda\int_{\mathbb{R}^{N}}|u_{0}|^{2_{\alpha}^{*}}\,dx.
\end{equation}
By Lemma 2.4 in \cite{Chang2013}, we get
\begin{equation}\label{10}
\int_{\mathbb{R}^{N}}k(x)f(u_n)u_n\,dx\rightarrow\int_{\mathbb{R}^{N}}k(x)f(u_0)u_0\,dx,
\end{equation}
 as $n\rightarrow\infty.$
It follows from the Fatou Lemma that
\begin{equation}\label{15}
\int_{\mathbb{R}^{N}}V(x)u_0^2\,dx\leq\liminf_{n\rightarrow\infty}\int_{\mathbb{R}^{N}}V(x)u_n^2\,dx.
\end{equation}

Next we will verify that $u_n\rightarrow u_0$ in $L^{2_{\alpha}^{*}}(\mathbb{R}^{N})$.
To this end, we divide the proof into two steps.

\textbf{Step 1:}\begin{itshape} For any $i\in J$, $\mu(\{x_{i}\})\leq \lambda\nu_{i}$ and $\mu_{\infty}\leq \lambda\nu_{\infty}$.
\end{itshape}

(1) Taking radially symmetric function $\varphi_{\varepsilon}$  as in Lemma 3.4,
we get
\begin{eqnarray}\label{18}
\begin{split}
&\iint_{\mathbb{R}^{2N}}\frac{|u_{n}(x)\varphi_{\varepsilon}(x)-u_{n}(y)\varphi_{\varepsilon}(y)|^{2}}
{|x-y|^{N+2\alpha}}\,dxdy\\
\leq&2\iint_{\mathbb{R}^{2N}}\frac{|u_{n}(x)-u_{n}(y)|^{2}\varphi_{\varepsilon}^{2}(y)}
{|x-y|^{N+2\alpha}}\,dxdy+2\iint_{\mathbb{R}^{2N}}\frac{u_{n}^{2}(x)|\varphi_{\varepsilon}(x)-\varphi_{\varepsilon}(y)|^{2}}
{|x-y|^{N+2\alpha}}\,dxdy\\
\leq&2\iint_{\mathbb{R}^{2N}}\frac{|u_{n}(x)-u_{n}(y)|^{2}}
{|x-y|^{N+2\alpha}}\,dxdy+2\iint_{\mathbb{R}^{2N}}\frac{u_{n}^{2}(x)|\varphi_{\varepsilon}(x)-\varphi_{\varepsilon}(y)|^{2}}
{|x-y|^{N+2\alpha}}\,dxdy.
\end{split}
\end{eqnarray}
Similar to the proof of (\ref{4}), we have
\begin{eqnarray}\label{11}
\begin{split}
\iint_{\mathbb{R}^{2N}}\frac{u_{n}^{2}(x)|\varphi_{\varepsilon}(x)-\varphi_{\varepsilon}(y)|^{2}}
{|x-y|^{N+2\alpha}}\,dxdy
\leq C\varepsilon^{-2\alpha}\int_{B(x_i,K\varepsilon)}u_{n}^{2}(x)\,dx+CK^{-N},
\end{split}
\end{eqnarray}
where $K>4$. As $\{u_{n}\}$ is bounded in $H^{\alpha}_r(\mathbb{R}^{N})$, it follows from (\ref{18}) and (\ref{11}) that $\{u_{n}\varphi_{\varepsilon}\}$ is bounded in $H^{\alpha}_r(\mathbb{R}^{N})$.
Then
$$\langle I_{\eta}'(u_{n}),u_{n}\varphi_{\varepsilon}\rangle\rightarrow0,$$
as $n\rightarrow\infty$,
which implies
\begin{eqnarray}\label{6}
\begin{split}
\int_{\mathbb{R}^{N}}(-\Delta)^{\frac{\alpha}{2}}u_{n}\cdot(-\Delta)^{\frac{\alpha}{2}}(u_{n}\varphi_{\varepsilon})\,dx
=\int_{\mathbb{R}^{N}}\left(\eta k(x) f(u_{n})u_{n}+\eta\lambda|u_{n}|^{2_{\alpha}^{*}}-V(x)u_n^2\right)\varphi_{\varepsilon}\,dx+o(1).
\end{split}
\end{eqnarray}

For any $\tau>0$, by (H2) there exist $p\in(2,2_{\alpha}^{*})$ and $C_6>0$ such that
\begin{equation*}
tf(t)\leq \frac{V_{1}}{2k_{2}}t^{2}+\tau |t|^{2_{\alpha}^{*}}+C_6|t|^{p},
\end{equation*}
which implies
\begin{displaymath}
\begin{split}
&\int_{\mathbb{R}^{N}}\left(\eta k(x) f(u_{n})u_{n}+\eta\lambda|u_{n}|^{2_{\alpha}^{*}}-V(x)u_n^2\right)\varphi_{\varepsilon}\,dx\\
\leq&\int_{\mathbb{R}^{N}}\left(\frac{V_{1}}{2}u_{n}^{2}+\tau k_{2}|u_{n}|^{2_{\alpha}^{*}}
+C_6k_{2}|u_{n}|^{p}+\lambda|u_{n}|^{2_{\alpha}^{*}}-V_{1}u_n^2\right)\varphi_{\varepsilon}\,dx\\
\leq&\int_{\mathbb{R}^{N}}\left(\tau k_{2}|u_{n}|^{2_{\alpha}^{*}}
+C_6 k_{2}|u_{n}|^{p}+\lambda|u_{n}|^{2_{\alpha}^{*}}\right)\varphi_{\varepsilon}\,dx.
\end{split}
\end{displaymath}
Note that
\begin{displaymath}
\begin{split}
\int_{\mathbb{R}^{N}}|u_{n}|^{p}\varphi_{\varepsilon}\,dx
=\int_{B(x_{i},2\varepsilon)}|u_{n}|^{p}\varphi_{\varepsilon}\,dx
\rightarrow\int_{B(x_{i},2\varepsilon)}|u_{0}|^{p}\varphi_{\varepsilon}\,dx,
\end{split}
\end{displaymath}
as $n\rightarrow\infty$ and
$$\int_{B(x_{i},2\varepsilon)}|u_{0}|^{p}\varphi_{\varepsilon}\,dx\rightarrow0,$$
 as $\varepsilon\rightarrow0$, then
\begin{displaymath}
\begin{split}
&\limsup_{\varepsilon\rightarrow0}\limsup_{n\rightarrow\infty}\int_{\mathbb{R}^{N}}\left(\eta k(x) f(u_{n})u_{n}+\eta\lambda|u_{n}|^{2_{\alpha}^{*}}-V(x)u_{n}^2\right)\varphi_{\varepsilon}\,dx\\
\leq&(\tau k_{2}+\lambda)\limsup_{\varepsilon\rightarrow0}\limsup_{n\rightarrow\infty}\int_{\mathbb{R}^{N}}|u_{n}|^{2_{\alpha}^{*}}\varphi_{\varepsilon}\,dx\\
=&(\tau k_{2}+\lambda)\limsup_{\varepsilon\rightarrow0}\int_{\mathbb{R}^{N}}\varphi_{\varepsilon}\,d\nu\\
=&(\tau k_{2}+\lambda)\nu_{i}.
\end{split}
\end{displaymath}
Letting $\tau\rightarrow0$, we get
\begin{eqnarray}\label{8}
\limsup_{\varepsilon\rightarrow0}\limsup_{n\rightarrow\infty}\int_{\mathbb{R}^{N}}\left(\eta k(x) f(u_{n})u_{n}+\eta\lambda|u_{n}|^{2_{\alpha}^{*}}-V(x)u_{n}^2\right)\varphi_{\varepsilon}\,dx\leq \lambda\nu_{i}.
\end{eqnarray}
By (\ref{31}), we have
\begin{displaymath}
\begin{split}
&\int_{\mathbb{R}^{N}}(-\Delta)^{\frac{\alpha}{2}}u_{n}\cdot(-\Delta)^{\frac{\alpha}{2}}(u_{n}\varphi_{\varepsilon})\,dx\\
=&\iint_{\mathbb{R}^{2N}}\frac{(u_{n}(x)-u_{n}(y))(u_{n}(x)\varphi_{\varepsilon}(x)-u_{n}(y)\varphi_{\varepsilon}(y))}{|x-y|^{N+2\alpha}}\,dxdy\\
=&\iint_{\mathbb{R}^{2N}}\frac{(u_{n}(x)-u_{n}(y))^{2}\varphi_{\varepsilon}(y)}{|x-y|^{N+2\alpha}}\,dxdy
+\iint_{\mathbb{R}^{2N}}\frac{(u_{n}(x)-u_{n}(y))(\varphi_{\varepsilon}(x)-\varphi_{\varepsilon}(y))u_{n}(x)}{|x-y|^{N+2\alpha}}\,dxdy.
\end{split}
\end{displaymath}
It is easy to verify that
$$\iint_{\mathbb{R}^{2N}}\frac{(u_{n}(x)-u_{n}(y))^{2}\varphi_{\varepsilon}(y)}{|x-y|^{N+2\alpha}}\,dxdy\rightarrow \int_{\mathbb{R}^{N}}\varphi_{\varepsilon}\,d\mu,$$
 as $n\rightarrow\infty$ and
$$\int_{\mathbb{R}^{N}}\varphi_{\varepsilon}\,d\mu\rightarrow\mu(\{x_{i}\}),$$
as $\varepsilon\rightarrow0$.
Note that the H\"{o}lder inequality implies
\begin{displaymath}
\begin{split}
&\left|\iint_{\mathbb{R}^{2N}}\frac{(u_{n}(x)-u_{n}(y))(\varphi_{\varepsilon}(x)-\varphi_{\varepsilon}(y))u_{n}(x)}{|x-y|^{N+2\alpha}}\,dxdy\right|\\
\leq&\iint_{\mathbb{R}^{2N}}\frac{|u_{n}(x)-u_{n}(y)|\cdot|\varphi_{\varepsilon}(x)-\varphi_{\varepsilon}(y)|\cdot|u_{n}(x)|}{|x-y|^{N+2\alpha}}\,dxdy\\
\leq&C\left(\iint_{\mathbb{R}^{2N}}\frac{u_{n}^{2}(x)|\varphi_{\varepsilon}(x)-\varphi_{\varepsilon}(y)|^{2}}{|x-y|^{N+2\alpha}}\,dxdy\right)^{1/2}.
\end{split}
\end{displaymath}

Similar to the proof of (\ref{7}), we have $$\lim_{\varepsilon\rightarrow0}\lim_{n\rightarrow\infty}\iint_{\mathbb{R}^{2N}}\frac{u_{n}^{2}(x)(\varphi_{\varepsilon}(x)-\varphi_{\varepsilon}(y))^{2}}
{|x-y|^{N+2\alpha}}\,dxdy=0.$$
Then, combining (\ref{6}) with (\ref{8}), we obtain that  for any $i\in J$,
$$\mu(\{x_{i}\})\leq \lambda\nu_{i}.$$

(2) Taking radially symmetric function $\chi_{R}$ as in Lemma 3.3, we could verify that  $\{u_{n}\chi_{R}\}$ is bounded in $H^{\alpha}_r(\mathbb{R}^{N})$, hence
$$\langle I_{\eta}'(u_{n}),u_{n}\chi_{R}\rangle\rightarrow0,$$
as $n\rightarrow\infty$, which implies
\begin{eqnarray}\label{20}
\begin{split}
\int_{\mathbb{R}^{N}}(-\Delta)^{\frac{\alpha}{2}}u_{n}\cdot(-\Delta)^{\frac{\alpha}{2}}(u_{n}\chi_{R})\,dx
=\int_{\mathbb{R}^{N}}\left(\eta k(x) f(u_{n})u_{n}+\eta\lambda|u_{n}|^{2_{\alpha}^{*}}-V(x)u_n^2\right)\chi_{R}\,dx+o(1).
\end{split}
\end{eqnarray}

Similar to the proof of (\ref{8}),
 we get
\begin{eqnarray}\label{21}
\begin{split}
\limsup_{R\rightarrow\infty}\limsup_{n\rightarrow\infty}\int_{\mathbb{R}^{N}}\left(\eta k(x) f_1(u_{n})u_{n}
+\eta\lambda|u_{n}|^{2_{\alpha}^{*}}
-k(x)f_2(u_n)u_n\right)\chi_{R}\,dx\leq\lambda\nu_{\infty}.
\end{split}
\end{eqnarray}
Notice that
\begin{displaymath}
\begin{split}
&\int_{\mathbb{R}^{N}}(-\Delta)^{\frac{\alpha}{2}}u_{n}\cdot(-\Delta)^{\frac{\alpha}{2}}(u_{n}\chi_{R})\,dx\\
=&\iint_{\mathbb{R}^{2N}}\frac{(u_{n}(x)-u_{n}(y))^{2}\chi_{R}(y)}{|x-y|^{N+2\alpha}}\,dxdy
+\iint_{\mathbb{R}^{2N}}\frac{(u_{n}(x)-u_{n}(y))(\chi_{R}(x)-\chi_{R}(y))u_{n}(x)}{|x-y|^{N+2\alpha}}\,dxdy.
\end{split}
\end{displaymath}
It is easy to verify that
$$\limsup_{R\rightarrow\infty}\limsup_{n\rightarrow\infty}\iint_{\mathbb{R}^{2N}}\frac{(u_{n}(x)-u_{n}(y))^{2}\chi_{R}(y)}{|x-y|^{N+2\alpha}}\,dxdy=\mu_{\infty}$$
and
\begin{displaymath}
\begin{split}
&\left|\iint_{\mathbb{R}^{2N}}\frac{(u_{n}(x)-u_{n}(y))(\chi_{R}(x)-\chi_{R}(y))u_{n}(x)}{|x-y|^{N+2\alpha}}\,dxdy\right|
\leq C\left(\iint_{\mathbb{R}^{2N}}\frac{u_{n}^{2}(x)|\chi_{R}(x)-\chi_{R}(y)|^{2}}{|x-y|^{N+2\alpha}}\,dxdy\right)^{1/2}.
\end{split}
\end{displaymath}
Note that
\begin{displaymath}
\begin{split}
&\limsup_{R\rightarrow\infty}\limsup_{n\rightarrow\infty}\iint_{\mathbb{R}^{2N}}\frac{u_{n}^{2}(x)(\chi_{R}(x)-\chi_{R}(y))^{2}}
{|x-y|^{N+2\alpha}}\,dxdy\\
=&\limsup_{R\rightarrow\infty}\limsup_{n\rightarrow\infty}\iint_{\mathbb{R}^{2N}}\frac{u_{n}^{2}(x)((1-\chi_{R}(x))-(1-\chi_{R}(y)))^{2}}
{|x-y|^{N+2\alpha}}\,dxdy,
\end{split}
\end{displaymath}
then, similar to the proof of (\ref{7}), we obtain
\begin{eqnarray*}\label{12}
\begin{split}
\limsup_{R\rightarrow\infty}\limsup_{n\rightarrow\infty}\iint_{\mathbb{R}^{2N}}\frac{u_{n}^{2}(x)((1-\chi_{R}(x))-(1-\chi_{R}(y)))^{2}}
{|x-y|^{N+2\alpha}}\,dxdy=0.
\end{split}
\end{eqnarray*}
Combining this with (\ref{20}) and (\ref{21}), we have
$$\mu_{\infty}\leq \lambda\nu_{\infty}.$$

\noindent\textbf{Step 2:} There exists $\lambda_{*}>0$ such that for any $0<\lambda<\lambda_{*}$, $\nu_{i}=0$ for any $i\in J$ and $\nu_{\infty}=0$. Suppose that there exists $i_{0}\in J$ such that $\nu_{i_{0}}>0$ or  $\nu_{\infty}>0$, using Lemma 3.4 and Step 1 we obtain
$$\nu_{i_{0}}\leq(S_{\alpha}^{-1}\mu(\{x_{i_{0}}\}))^{2_{\alpha}^{*}/2}\leq(S_{\alpha}^{-1}\lambda\nu_{i_{0}})^{2_{\alpha}^{*}/2}$$
or
$$\nu_{\infty}\leq(S_{\alpha}^{-1}\mu_{\infty})^{2_{\alpha}^{*}/2}\leq(S_{\alpha}^{-1}\lambda\nu_{\infty})^{2_{\alpha}^{*}/2},$$
which implies
\begin{equation}\label{28}
\nu_{i_{0}}\geq(S_{\alpha}\lambda^{-1})^{2_{\alpha}^{*}/(2_{\alpha}^{*}-2)}
\end{equation}
or
\begin{equation}\label{23}
\nu_{\infty}\geq(S_{\alpha}\lambda^{-1})^{2_{\alpha}^{*}/(2_{\alpha}^{*}-2)}.
\end{equation}

By (H3), we have
\begin{eqnarray}\label{27}
\begin{split}
 2I_{\eta}(u_{n})-\langle   I_{\eta}'(u_{n}),u_{n}\rangle
 =&\eta\int_{\mathbb{R}^{N}}k(x)(f(u_n)u_n-2F(u_n))\,dx
 +\eta\lambda\left(1-2/2_{\alpha}^{*}\right)\int_{\mathbb{R}^{N}}|u_n|^{2_{\alpha}^{*}}\,dx\\
 \geq&\eta\lambda\frac{2\alpha}{N}\int_{\mathbb{R}^{N}}|u_n|^{2_{\alpha}^{*}}\,dx
 \geq\lambda\frac{\alpha}{N}\int_{\mathbb{R}^{N}}|u_n|^{2_{\alpha}^{*}}\varphi_{\varepsilon}\,dx.
\end{split}
\end{eqnarray}
Letting $n\rightarrow\infty$,  we obtain that $2C_{3}\geq\lambda\frac{\alpha}{N}\int_{\mathbb{R}^{N}}\varphi_{\varepsilon}\,d\nu$. Since $\int_{\mathbb{R}^{N}}\varphi_{\varepsilon}\,d\nu\rightarrow\nu_{i_{0}}$, as $\varepsilon\rightarrow0$, it follows that
$$2C_{3}\geq\lambda\frac{\alpha}{N}\nu_{i_0}.$$
Similarly, we get $$2C_{3}\geq\lambda \frac{\alpha}{N}\nu_\infty.$$
It follows from (\ref{28}) or (\ref{23}) that
$$2C_{3}\geq\lambda\frac{\alpha}{N}(S_{\alpha}\lambda^{-1})^{2_{\alpha}^{*}/(2_{\alpha}^{*}-2)}
=\frac{\alpha}{N}S_{\alpha}^{N/(2\alpha)}\lambda^{-(N-2\alpha)/(2\alpha)}$$
which implies $\lambda\geq (\frac{\alpha}{2NC_{3}})^{2\alpha/(N-2\alpha)}S_{\alpha}^{2_{\alpha}^{*}/2}\triangleq \lambda_{*}$.

So the assumption  $0<\lambda<\lambda_{*}$
gives a contradiction.
Then, for any $i\in J$, $\nu_{i}=0$ and $\nu_{\infty}=0$.
Using (\ref{14}) we obtain
\begin{displaymath}
\begin{split}
\limsup_{n\rightarrow\infty}\int_{\mathbb{R}^{N}}|u_{n}|^{2_{\alpha}^{*}}\,dx=\int_{\mathbb{R}^{N}}|u_{0}|^{2_{\alpha}^{*}}\,dx.
\end{split}
\end{displaymath}
As
$|u_{n}-u_{0}|^{2_{\alpha}^{*}}\leq2^{2_{\alpha}^{*}}(|u_{n}|^{2_{\alpha}^{*}}+|u_{0}|^{2_{\alpha}^{*}})$,
it follows from the Fatou Lemma that
\begin{displaymath}
\begin{split}
\int_{\mathbb{R}^{N}}2^{2_{\alpha}^{*}+1}|u_{0}|^{2_{\alpha}^{*}}\,dx
=&\int_{\mathbb{R}^{N}}\liminf_{n\rightarrow\infty}(2^{2_{\alpha}^{*}}|u_{n}|^{2_{\alpha}^{*}}+2^{2_{\alpha}^{*}}|u_{0}|^{2_{\alpha}^{*}}-|u_{n}-u_{0}|^{2_{\alpha}^{*}})\,dx\\
\leq&\liminf_{n\rightarrow\infty}\int_{\mathbb{R}^{N}}(2^{2_{\alpha}^{*}}|u_{n}|^{2_{\alpha}^{*}}+2^{2_{\alpha}^{*}}|u_{0}|^{2_{\alpha}^{*}}-|u_{n}-u_{0}|^{2_{\alpha}^{*}})\,dx\\
=&2^{2_{\alpha}^{*}+1}\int_{\mathbb{R}^{N}}|u_{0}|^{2_{\alpha}^{*}}\,dx-\limsup_{n\rightarrow\infty}\int_{\mathbb{R}^{N}}|u_{n}-u_{0}|^{2_{\alpha}^{*}}\,dx,
\end{split}
\end{displaymath}
which implies
$\limsup_{n\rightarrow\infty}\int_{\mathbb{R}^{N}}|u_{n}-u_{0}|^{2_{\alpha}^{*}}\,dx=0$. Then
 $$u_{n}\rightarrow u_{0}\ \mbox{in}\ L^{2_{\alpha}^{*}}(\mathbb{R}^{N}),\ \mbox{as}\ n\rightarrow\infty.$$
 Note that
 $I_{\eta}'(u_{n})\rightarrow0$,
 it follows from (\ref{9}), (\ref{10}) and (\ref{15}) that
 \begin{eqnarray*}
 \begin{split}
 &\limsup_{n\rightarrow\infty}\int_{\mathbb{R}^{N}}|(-\Delta)^{\frac{\alpha}{2}}u_{n}|^2\,dx\\
=&\limsup_{n\rightarrow\infty}\left(\eta\int_{\mathbb{R}^{N}}k(x) f(u_{n})u_{n}\,dx+\eta\lambda\int_{\mathbb{R}^{N}}|u_{n}|^{2_{\alpha}^{*}}\,dx
-\int_{\mathbb{R}^{N}}V(x)u_n^2\,dx\right)\\
\leq&\eta\int_{\mathbb{R}^{N}}k(x) f(u_{0})u_{0}\,dx+\eta\lambda\int_{\mathbb{R}^{N}}|u_{0}|^{2_{\alpha}^{*}}\,dx
-\int_{\mathbb{R}^{N}}V(x)u_0^2\,dx\\
\leq&\liminf_{n\rightarrow\infty}\int_{\mathbb{R}^{N}}|(-\Delta)^{\frac{\alpha}{2}}u_{n}|^2\,dx,
 \end{split}
 \end{eqnarray*}
 which implies
 \begin{equation}\label{16}
\lim_{n\rightarrow\infty}\int_{\mathbb{R}^{N}}|(-\Delta)^{\frac{\alpha}{2}}u_{n}|^2\,dx=\int_{\mathbb{R}^{N}}|(-\Delta)^{\frac{\alpha}{2}}u_{0}|^2\,dx.
 \end{equation}
Thus
  \begin{eqnarray*}
 \begin{split}
 &\limsup_{n\rightarrow\infty}\int_{\mathbb{R}^{N}}V(x)u_n^2\,dx\\
=&\limsup_{n\rightarrow\infty}\left(\eta\int_{\mathbb{R}^{N}}k(x) f(u_{n})u_{n}\,dx+\eta\lambda\int_{\mathbb{R}^{N}}|u_{n}|^{2_{\alpha}^{*}}\,dx
-\int_{\mathbb{R}^{N}}|(-\Delta)^{\frac{\alpha}{2}}u_{n}|^2\,dx\right)\\
=&\eta\int_{\mathbb{R}^{N}}k(x) f(u_{0})u_{0}\,dx+\eta\lambda\int_{\mathbb{R}^{N}}|u_{0}|^{2_{\alpha}^{*}}\,dx
-\int_{\mathbb{R}^{N}}|(-\Delta)^{\frac{\alpha}{2}}u_{0}|^2\,dx\\
=&\int_{\mathbb{R}^{N}}V(x)u_0^2\,dx.
 \end{split}
 \end{eqnarray*}
As $u_n\rightarrow u_0$  weakly in $H^\alpha_r(\mathbb{R}^{N})$, it follows from  (\ref{16}) that
 $$\lim_{n\rightarrow\infty}\int_{\mathbb{R}^{N}}|(-\Delta)^{\frac{\alpha}{2}}(u_{n}-u_0)|^2\,dx+\int_{\mathbb{R}^{N}}V(x)(u_n-u_0)^2\,dx=0,$$
 which implies $u_n\rightarrow u_0$  in $H^\alpha_r(\mathbb{R}^{N})$.
This completes the proof.  \end{proof}

Finally, we will show that the sequence $\{u_n\}$ of critical points for $I_{\eta_{n}}$ is bounded and it is a (PS) sequence for $\widetilde{I}$. Then, from Lemma 3.5 we obtain a nontrivial critical point for  $\widetilde{I}$.
To show the boundedness of $\{u_n\}$,
we will use  the following Pohozaev type
identity  for (\ref{19}):

Let $u\in H^{\alpha}(\mathbb{R}^{N})$ be a weak solution of (\ref{19}), then
\begin{eqnarray}\label{24}
\begin{split}
&\frac{N-2\alpha}{2}\int_{\mathbb{R}^{N}}|(-\Delta)^{\frac{\alpha}{2}}u|^2\,dx
-N\int_{\mathbb{R}^{N}}\left(\eta_{n}k(x)F(u)+\frac{\eta_{n}\lambda}{2_{\alpha}^{*}}|u|^{2_{\alpha}^{*}}-\frac{1}{2}V(x)u^2\right)\,dx\\
=&\int_{\mathbb{R}^{N}}\eta_{n}F(u)\nabla k\cdot x\,dx-\frac{1}{2}\int_{\mathbb{R}^{N}}u^2\nabla V\cdot x\,dx.
\end{split}
\end{eqnarray}
In \cite{Chang2013}, using the $\alpha$-harmonic extension, the authors prove the Pohozaev identity with subcritical nonlinearity. In this paper, although the problem (\ref{19})  involves  critical nonlinearity $|u|^{2_{\alpha}^{*}-2}u$, the potential functions $V(x)$ and $k(x)$,
similar to the proof of Pohozaev identity in \cite{Chang2013}, we could also obtain the Pohozaev identity (\ref{24}), so we do not provide the proof here.

\vspace{2mm}

\noindent \textbf{Proof of Theorem 1.1 } (1) By Theorem 3.1, for almost every $\eta\in[\overline{\eta},1]$, there exists a bounded sequence $\{u_{\eta,n}\}\subset H^{\alpha}_{r}(\mathbb{R}^{N})$ such that $I_{\eta}(u_{\eta,n})\rightarrow c_{\eta}$ and $I_{\eta}'(u_{\eta,n})\rightarrow0$ in
 $H^{\alpha}_{r}(\mathbb{R}^{N})$, as $n\rightarrow\infty$. By Lemma 3.2, $0<c_{\eta}\leq c_0$ for any $\eta\in[\overline{\eta},1]$. We assume that $|I_{\eta}(u_{\eta,n})|\leq c_0+1$ for any $n\in\mathbb{N}$.

  Let $\lambda_*=(\frac{\alpha}{2N(c_0+1)})^{2\alpha/(N-2\alpha)}S_{\alpha}^{2_{\alpha}^{*}/2}$ (see Step 2 in  Lemma 3.5). If $\lambda\in(0,\lambda_*)$, by Lemma 3.5,  passing to a subsequence if possible, there exists $u_{\eta}\in H^{\alpha}_{r}(\mathbb{R}^{N})\setminus\{0\}$ such that
 $u_{\eta,n}\rightarrow u_{\eta}$ in $H^{\alpha}_{r}(\mathbb{R}^{N})$, as $n\rightarrow\infty$. Then, $I_{\eta}(u_{\eta})=c_{\eta}$ and $I_{\eta}'(u_{\eta})=0$.

Let $\{\eta_{n}\}\subset[\overline{\eta},1]$ with $\eta_{n}\uparrow1$ such that there exists $u_n\in H^{\alpha}_{r}(\mathbb{R}^{N})\setminus\{0\}$ satisfying
$I_{\eta_{n}}(u_{n})=c_{\eta_{n}}\leq c_{0}$,   $I_{\eta_{n}}'(u_{n})=0$.
Then $u_{n}$ is a weak solution of the following equation
\begin{equation*}
(-\Delta)^{\alpha}u+V(x)u=\eta_{n}k(x)f_1(u)+\eta_{n}\lambda|u|^{2_{\alpha}^{*}-2}u.
\end{equation*}
By Pohozaev identity for  the above equation, we get
\begin{eqnarray*}
\begin{split}
&\frac{N-2\alpha}{2}\int_{\mathbb{R}^{N}}|(-\Delta)^{\frac{\alpha}{2}}u_n|^2\,dx
-N\int_{\mathbb{R}^{N}}\left(\eta_{n}k(x)F_1(u_n)+\frac{\eta_{n}\lambda}{2_{\alpha}^{*}}|u_n|^{2_{\alpha}^{*}}-\frac{1}{2}V(x)u_n^2\right)\,dx\\
=&\int_{\mathbb{R}^{N}}\eta_{n}F(u_n)\nabla k\cdot x\,dx-\frac{1}{2}\int_{\mathbb{R}^{N}}u_n^2\nabla V\cdot x\,dx.
\end{split}
\end{eqnarray*}
Note that $F(t)\geq0$ for any $t\in\mathbb{R}$, it follows from (V1) and (K2) that
\begin{eqnarray}\label{29}
\begin{split}
\int_{\mathbb{R}^{N}}|(-\Delta)^{\frac{\alpha}{2}}u_n|^2\,dx
=&NI_{\eta_{n}}(u_{n})-\int_{\mathbb{R}^{N}}\eta_{n}F(u_n)\nabla k\cdot x\,dx+\frac{1}{2}\int_{\mathbb{R}^{N}}u_n^2\nabla V\cdot x\,dx\\
\leq& NI_{\eta_{n}}(u_{n})\leq Nc_0.
\end{split}
\end{eqnarray}
Using (\ref{1}), for any $\varepsilon>0$ we obtain
\begin{eqnarray*}
\begin{split}
\int_{\mathbb{R}^{N}}V(x)u_n^2\,dx
&=\eta_{n}\int_{\mathbb{R}^{N}}\left(k(x)f(u_n)u_n+\lambda|u_n|^{2_{\alpha}^{*}}\right)\,dx
-\int_{\mathbb{R}^{N}}|(-\Delta)^{\frac{\alpha}{2}}u_n|^2\,dx\\
&\leq\int_{\mathbb{R}^{N}}\left(k(x)f(u_n)u_n+\lambda|u_n|^{2_{\alpha}^{*}}\right)\,dx\\
&\leq\int_{\mathbb{R}^{N}}\varepsilon k_{2}u_n^2\,dx+\left(k_{2}C(\varepsilon)+\lambda\right)\int_{\mathbb{R}^{N}}|u_n|^{2_{\alpha}^{*}}\,dx.
\end{split}
\end{eqnarray*}
Taking $\varepsilon=V_{1}/(2k_{2})$,  by (\ref{5}) and (\ref{29}) we get
$$\int_{\mathbb{R}^{N}}u_n^2\,dx\leq C\int_{\mathbb{R}^{N}}|u_n|^{2_{\alpha}^{*}}\,dx\leq C\left(\int_{\mathbb{R}^{N}}|(-\Delta)^{\frac{\alpha}{2}}u_n|^2\,dx\right)^{2_{\alpha}^{*}/2}\leq C.$$
Then $\{u_n\}$ is bounded in $H^{\alpha}_{r}(\mathbb{R}^{N})$.  Therefore,  $\{\int_{\mathbb{R}^{N}}k(x)F(u_n)\,dx+\frac{\lambda}{2_{\alpha}^{*}}\int_{\mathbb{R}^{N}}|u_n|^{2_{\alpha}^{*}}\,dx\}$ is bounded. It follows from Remark 3.1 that as $n\rightarrow\infty$
 $$\widetilde{I}(u_n)=I_{1}(u_n)=I_{\eta_{n}}(u_n)+(\eta_{n}-1)\int_{\mathbb{R}^{N}}k(x)F(u_n)\,dx
 +(\eta_{n}-1)\frac{\lambda}{2_{\alpha}^{*}}\int_{\mathbb{R}^{N}}|u_n|^{2_{\alpha}^{*}}\,dx\rightarrow c_1.$$
For any $\phi\in H^{\alpha}_r(\mathbb{R}^{N})$, combining (\ref{1}), the H\"{o}lder inequality with Theorem 2.1 we obtain
\begin{eqnarray*}
\begin{split}
\left|\int_{\mathbb{R}^{N}}(k(x)f(u_n)+\lambda|u_n|^{2_{\alpha}^{*}-2}u_n)\phi\,dx\right|
\leq&\int_{\mathbb{R}^{N}}(k_{2}|u_n\phi|+C|u_n|^{2_{\alpha}^{*}-1}|\phi|)\,dx\\
\leq&k_{2}||u_n||_{L^2(\mathbb{R}^{N})}||\phi||_{L^2(\mathbb{R}^{N})}
+C||u_n||_{L^{2_{\alpha}^{*}}(\mathbb{R}^{N})}^{(2_{\alpha}^{*}-1)/2_{\alpha}^{*}}||\phi||_{L^{2_{\alpha}^{*}}(\mathbb{R}^{N})}\\
\leq&C||\phi||_{H^{\alpha}(\mathbb{R}^{N})}.
\end{split}
\end{eqnarray*}
Since
$$\langle\widetilde{I}'(u_n), \phi\rangle=\langle I_{1}'(u_n), \phi\rangle=\langle I_{\eta_{n}}'(u_n), \phi\rangle+(\eta_{n}-1)\int_{\mathbb{R}^{N}}k(x)f(u_n)\phi\,dx+(\eta_{n}-1)\lambda\int_{\mathbb{R}^{N}}|u_n|^{2_{\alpha}^{*}-2}u_n\phi\,dx,$$
we get as $n\rightarrow\infty $
$$||\widetilde{I}'(u_n)||=\sup\{\big|\langle\widetilde{I}'(u_n),\phi\rangle\big|:||\phi||_{H^{\alpha}(\mathbb{R}^{N})}=1\}\rightarrow 0.$$

 For any $0<\lambda<\lambda_{*}$, passing to a subsequence, still denoted by $\{u_n\}$, we assume that $u_n\rightarrow u_0$ in $H^{\alpha}_{r}(\mathbb{R}^{N})$. Then $\widetilde{I}(u_0)=I_{1}(u_0)=c_1$ and $\widetilde{I}'(u_0)=I_{1}'(u_0)=0$. It follows that $u_0$ is a nontrivial weak solution.

  (2) $u_0$ is nonnegative. In fact, it suffices to consider the following functionals on $H_r^{\alpha}(\mathbb{R}^{N})$:
  $$\widetilde{I}^+(u)=\frac{1}{2}\int_{\mathbb{R}^{N}} (|(-\Delta)^{\frac{\alpha}{2}} u|^{2}+V(x)u^{2})\,dx
-\int_{\mathbb{R}^{N}}k(x)F(u)\,dx-\frac{\lambda}{2_{\alpha}^{*}}\int_{\mathbb{R}^{N}}|u^+|^{2_{\alpha}^{*}}\,dx$$
and
$$I_{\eta}^+(u)=\frac{1}{2}\int_{\mathbb{R}^{N}} (|(-\Delta)^{\frac{\alpha}{2}} u|^{2}+V(x)u^{2})\,dx
-\eta\int_{\mathbb{R}^{N}}k(x)F(u)\,dx-\eta\frac{\lambda}{2_{\alpha}^{*}}\int_{\mathbb{R}^{N}}|u^+|^{2_{\alpha}^{*}}\,dx,$$
  where $u^+=\max\{u,0\}$.

  Similar to the argument of (1), there exists a nontrivial weak solution $u_0$ of (\ref{p}). It is easy to verify that $u_0$ is nonnegative.
  This concludes the proof of Theorem 1.1.
 \qedsymbol

\section*{Acknowledgements}
\noindent
B. Zhang was supported by Natural Science Foundation of Heilongjiang Province of China (No. A201306) and Research Foundation of Heilongjiang Educational Committee (No. 12541667) and Doctoral Research Foundation of Heilongjiang Institute of Technology (No. 2013BJ15).
D. Repov\v{s} was supported in part by the Slovenian Research Agency
grant P1-0292-0101.

\end{document}